\newtheorem{teo}{Theorem}[section]
\newtheorem{prop}[teo]{Proposition}
\newtheorem{lema}[teo]{Lemma}
\theoremstyle{definition}
\newtheorem{defin}[teo]{Definition}
\theoremstyle{remark}
\newtheorem{ex}[teo]{Example}
\DeclareMathSymbol{\shortminus}{\mathbin}{AMSa}{"39}
\title[An action of the cactus group on shifted tableau crystals]{An action of the cactus group on shifted tableau crystals}
\author[Inês Rodrigues]{Inês Rodrigues\thanks{\href{mailto:imarrodrigues@fc.ul.pt}{imarrodrigues@fc.ul.pt}. The author is partially supported by the Lisbon Mathematics PhD program (funded by the Portuguese Science Foundation). This research was made within the activities of the Group for Linear, Algebraic and Combinatorial Structures of the Center for Functional Analysis, Linear Structures and Applications (University of Lisbon), and was partially supported by the Portuguese Science Foundation through the Strategic Project UID/MAT/04721/2013.}\addressmark{1}}
\address{\addressmark{1} Center for Functional Analysis, Linear
Structures and Applications, Faculty of Sciences, University of Lisbon, Portugal}
\abstract{Recently, Gillespie, Levinson and Purbhoo introduced a crystal-like structure for shifted tableaux, called the shifted tableau crystal. We introduce a shifted analogue of the crystal reflection operators, which coincides with the restriction of the shifted Schützenberger involution to any primed interval of two adjacent letters. Unlike type $A$ Young tableau crystals, these operators do not realize an action of the symmetric group on the shifted tableau crystal because  braid relations do not hold. We exhibit a natural internal action of the cactus group, realized by restrictions of the shifted Schützenberger involution on primed intervals of the underlying crystal alphabet.}
\keywords{Shifted tableaux, Schützenberger involution, crystal bases, cactus group}
\begin{document}
\maketitle

\section{Introduction}

Young tableaux, as well as shifted tableaux, arise in many areas of mathematics. While the first have their original role in the representation theory of symmetric groups, the latter have their origin in projective representations, due to I. Schur. One important tool for the study of the former are Kashiwara crystals \cite{Kash95}. We recall that a \emph{Kashiwara crystal} of type $A$ (for $GL_n$) is a non-empty set $\mathcal{B}$ together with maps $e_i, f_i : \mathcal{B} \longrightarrow \mathcal{B} \sqcup \{\emptyset\}$, length functions $\varepsilon_i, \varphi_i : \mathcal{B} \longrightarrow \mathbb{Z}$, for $i \in I= [n-1]$, and weight function $wt: \mathcal{B} \longrightarrow \mathbb{Z}^n$ satisfying certain axioms (see \cite[Definition 2.13]{BumpSchi17}). This crystal may be regarded as a directed graph, with vertices  in $\mathcal{B}$ and $i$-coloured edges $y \xrightarrow{i} x$ if and only if $f_i (y) = x$, for $i \in I$. The set of semistandard Young tableaux of a given shape in the alphabet $[n]$ is known to provide a model for Kashiwara type $A$ crystals \cite[Chapter 3]{BumpSchi17}, with coplactic operators $e_i$ and $f_i$ defined in terms of reading words. This crystal is isomorphic to the crystal basis of an irreducible $U_q(\mathfrak{gl}_n)$-module. The Schützenberger involution \cite{Schu76} (also known as Lusztig involution) is defined on the type $A$ Young tableau crystal \cite{BumpSchi17}, and acts on its graph structure by "flipping" it upside down, while reverting the orientation of arrows and their colours. It is realized by the evacuation (for straight shapes) or its coplactic extension, often called reversal (for skew shapes).

Recently, Gillespie, Levinson and Purbhoo \cite{GL19, GLP17} introduced a crystal-like structure on shifted tableaux, called the \emph{shifted tableau crystal}. The vertices of this structure are the skew shifted tableaux, for a given shape $\lambda/\mu$ on the primed alphabet $ [n]' $ and it has double edges, corresponding to the action of the primed and unprimed lowering and raising operators which commute with the shifted \textit{jeu de taquin}. Each connected component has a unique highest weight element (a vertex for which all the raising operators are equal to $\emptyset$), a Littlewood-Richardson-Stembridge (LRS) tableau of shape $\lambda/\mu$, and a unique lowest weight element, its reversal. We remark that this structure is not a queer crystal, differing from the one in \cite{AsOg18,GHPS18}, which is indeed a crystal for the queer Lie superalgebra.

We introduce a shifted analogue of the crystal reflection operator in type $A$, for each $i \in I$, for the shifted tableau crystal (Definition \ref{shrefop}). It coincides with the restriction of the shifted Schützenberger involution on the letters $\{i',i,(i+1)',i+1\}$ (see Figure \ref{fig:flip}). Unlike type $A$ crystals, they do not define an action of the symmetric group on the shifted tableau crystal, since the braid relations do not need to be satisfied. We then show that the restriction of the shifted Schützenberger involution  to  all primed subintervals of $[n]$ yields an action of the cactus group on that crystal. This paper has the following structure: Section \ref{sec2} provides the basic notions on shifted tableaux. Then, Section \ref{sec3} gives the main concepts on the shifted tableau crystal of \cite{GL19,GLP17}, and introduces the shifted crystal reflection operators. In Section \ref{sec4}, we prove the main result (Theorem \ref{cactusaction}), where a natural action of the cactus group in the shifted tableau crystal, realized by those restrictions of the shifted Schützenberger involution, is exhibited (see Figure \ref{fig:crystal}).

\section{Background}\label{sec2}

This section is intended to provide the basic definitions and results on shifted tableaux, words, and involutions among them. We follow the notations in \cite{GL19, GLP17}. A \emph{strict partition} is a sequence $\lambda = (\lambda_1, \ldots, \lambda_{\ell(\lambda)})$ of non-negative integers such that $\lambda_1 > \ldots > \lambda_{\ell(\lambda)}$. It may be represented, in English notation, by a \emph{shifted shape} $S(\lambda)$ which consists of $|\lambda|$ boxes placed in $\ell(\lambda)$ rows, with the $i$-th row having $\lambda_i$ boxes and being shifted $i-1$ units to the right. Skew shapes $S(\lambda/\mu)$ are defined as usual. Shapes of the form $\lambda/\emptyset$ are called \emph{straight}. A shifted shape $\lambda$ lies naturally in a \emph{stair shifted shape} $\delta = (\lambda_1, \lambda_1-1, \ldots, 1)$. The complement of the shape of $\lambda$ in $\delta$ defines the shifted partition $\lambda^{\vee}$, the \emph{complement} of $\lambda$ (see Figure \ref{fig:1}).
\begin{figure}
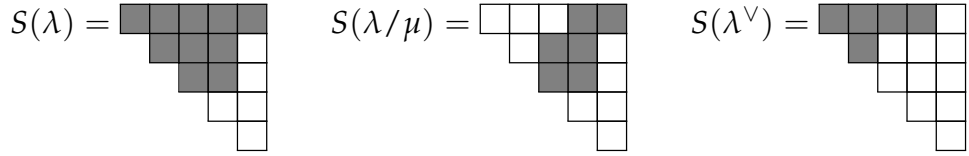

\begin{center}
$S(\lambda)=
	\ytableausetup{smalltableaux}	
	\begin{ytableau}
    *(gray)  & *(gray)  &*(gray)  &*(gray)  &*(gray)  \\
    \none & *(gray)  & *(gray)  & *(gray) &   \\
    \none & \none & *(gray) & *(gray)  & \\
    \none & \none & \none & & \\
    \none & \none & \none & \none &
  \end{ytableau}
  \qquad
  S(\lambda/\mu)=
	\ytableausetup{smalltableaux}	
	\begin{ytableau}
    {}  & {} & {} &*(gray)  &*(gray)  \\
    \none & {}  & *(gray)  & *(gray) &   \\
    \none & \none & *(gray) & *(gray) & \\
    \none & \none & \none & & \\
    \none & \none & \none & \none &
  \end{ytableau}
  \qquad
 S(\lambda^{\vee})=
	\ytableausetup{smalltableaux}	
	\begin{ytableau}
    *(gray)  & *(gray)  &*(gray)  &*(gray)  &   \\
    \none & *(gray)  &    &  &   \\
    \none & \none &   &  & \\
    \none & \none & \none & & \\
    \none & \none & \none & \none &
  \end{ytableau}
  $
 \end{center}
 \caption{The shapes of $\lambda$, $\lambda/\mu$ and $\lambda^{\vee}$, shaded in gray, for $\lambda= (5,3,2)$ and $\mu = (3,1)$.}
 \label{fig:1}
 \end{figure}
Consider the totally ordered alphabet $[n] = \{1 < \ldots < n\}$ and define the \textit{primed} (or marked) alphabet $[n]'$ as $\{1' \!<\! 1 \! <\! \ldots \! < \! n'\! < \! n\}$. We write $\mathbf{i}$ when referring to the letters $i$ and $i'$ without specifying whether they are primed. The \emph{canonical form} of a string $w$ in $[n]'$ is the string obtained from $w$ by replacing the leftmost $\mathbf{i}$ (if it exists) with $i$, for all $1 \leq i \leq n$. Strings $w$ and $v$ are said to be \emph{equivalent}, denoted by $w \simeq v$, if they have the same canonical form. We have $12'2'1123'2'2 \simeq 122'11232'2$, the latter being the canonical form of the former. A \emph{word} $\hat{w}$ is an equivalence class of strings \cite[Definition 2.2]{GLP17}. The \emph{canonical representative} is given by the canonical form of all the elements of $\hat{w}$. The \emph{weight} of a word $\hat{w}$ is $wt(\hat{w}) = (wt_1, \ldots, wt_n)$, where $wt_i$ is equal to the number of $i$ and $i'$ in $\hat{w}$. We denote $wt(\hat{w})^{\mathsf{rev}} = (wt_n, \ldots, wt_1)$.

Let $\lambda$ and $\mu$ be strict partitions such that $\mu \subseteq \lambda$. A \emph{semistandard shifted (Young) tableau} $T$ of shape $\lambda / \mu$ is a filling of $S(\lambda/\mu)$ with letters in $\{1' \!<\! 1 \!<\! \ldots\}$ such that the entries are weakly increasing in each row and in each column and there is at most one $i'$ per row and one $i$ per column. The \emph{(row) reading word} $w(T)$ of such a tableau is formed by reading the entries of $T$ from left to right, from bottom to top. The \emph{weight} of $T$ is defined as $wt(T):=wt(w(T))$. A shifted tableau is said to be \emph{standard} if its weight is $(1, \ldots, 1)$. We say that a tableau $T$ is in \emph{canonical form} if $w(T)$ is the canonical representative of $\widehat{w(T)}$. A tableau $T$ in canonical form is identified with its set of \emph{representatives}, that are obtained by possibly priming the entry corresponding to the leftmost $i$ in $w(T)$, for all $i$. Let $\mathsf{SShT}(\lambda/\mu, n)$ denote the set of semistandard shifted tableaux of shape $\lambda/\mu$ in the alphabet $[n]'$ (in canonical form). The following is a semistandard shifted tableau:
\begin{center}
$T=\ytableausetup{smalltableaux}
\begin{ytableau}
{} & {} & {} & 1 &1 & 2'\\
\none & {} & 2 & 3' & 3\\
\none & \none & 3 & 3
\end{ytableau}$, where
$w(T)=3323'3112' $ and
$wt(T)=(2,2,4)$.
\end{center}

\subsection{The shifted \textit{jeu de taquin}, Knuth equivalence and dual equivalence}\label{sub22}

The shifted \emph{jeu de taquin} \cite{Sag87,Wor84}, for shifted tableaux, is similar to the one for usual Young tableaux. Given  $T\in \mathsf{SShT}(\lambda/\mu, n)$, an \emph{inner jeu de taquin slide} is a process in which an inner corner is chosen and then either the entry to its right or the one below it is chosen to slide, in such way that the tableau is still semistandard, and then repeating the process with the obtained empty square until it is an outer corner. An \emph{outer jeu de taquin slide} is the reverse process, starting with an outer corner. This process has an exception, illustrated by the following slide:

\begin{center}
\includegraphics[scale=0.4]{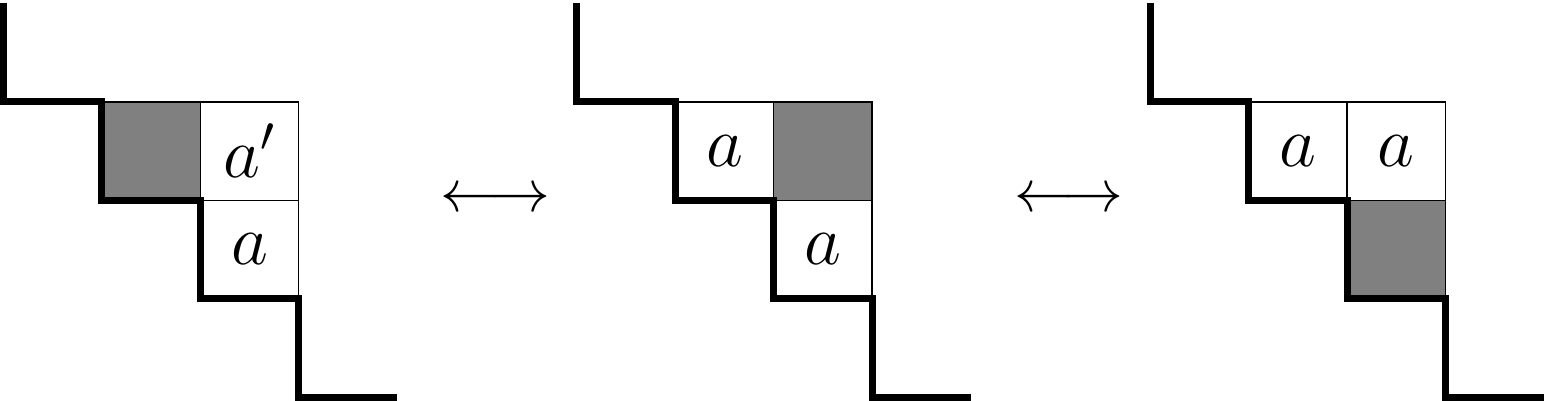}
\end{center}

The \emph{rectification} $rect(T)$ of $T$ is the tableau obtained by applying a sequence of inner slides until a straight shape is obtained (this does not depend on the chosen sequence of slides \cite[Theorem 11.1]{Sag87}). The rectification of a word $w$ is the word of the rectification of any tableau with reading word $w$. Two tableaux are said to be \emph{shifted jeu de taquin equivalent} if they have the same rectification. An operator on shifted tableaux is called \emph{coplactic} if it commutes with the shifted \textit{jeu de taquin}.

The \emph{standardization} of a word $w$, denoted $\mathsf{std}(w)$, is obtained by replacing the letters of any representative of $w$ with $1, \ldots, n$, from least to greatest, reading right to left for primed entries, and left to right for unprimed entries \cite[Definition 2.8]{GLP17}. This process does not depend on the choice of the representative. The standardization of a shifted tableau $T$ is the tableau of the same shape as $T$ with reading word $\mathsf{std}(w(T))$. 

Given $\nu$ a strict partition, the \emph{Yamanouchi tableau} $Y_{\nu}$ is the tableau of shape $\nu$ whose $i$-th row is filled only with $i$'s. It is easy to check that $Y_{\nu}$ is the unique tableau, up to canonical form, that has shape and weight equal to $\nu$. A \emph{Littlewood-Richardson-Stembridge (LRS) tableau} is a tableau $T \in \mathsf{SShT}(\lambda/\mu,n)$ such that $rect(T)=Y_{\nu}$ \cite{Wor84}. Its reading word is called a \emph{ballot} (or \emph{lattice}) word. Given strict partitions $\lambda$, $\mu$ and $\nu$, such that $|\lambda| = |\mu| + |\nu|$, the \emph{shifted Littlewood-Richardson} coefficients $f_{\mu\nu}^{\lambda}$ are defined as the number of LRS tableaux of shape $\lambda/\mu$ and weight $\nu$. For other formulations, see \cite{CNO14,Stem89}.

\begin{defin}
Two words $w$ and $v$ are said to be \emph{shifted Knuth equivalent}, denoted $w \equiv_k v$, if one can be obtained from the other by applying a sequence of the following Knuth moves on adjacent letters:

\textbf{(K1)} $bac \longleftrightarrow bca$ if, under the standardization ordering, $a < b < c$.

\textbf{(K2)} $acb \longleftrightarrow cab$ if, under the standardization ordering, $a < b < c$.

\textbf{(S1)} $ab \longleftrightarrow ba$ if these are the first two letters.

\textbf{(S2)} $aa \longleftrightarrow aa'$ if these are the first two letters.
\end{defin}

Two semistandard shifted tableaux are shifted Knuth equivalent if their reading words are shifted Knuth equivalent \cite[Theorem 12.2]{Sag87}\label{jdtknuth}, or, equivalentely, if they have the same rectification \cite[Theorem 6.4.17]{Wor84}.  Two semistandard shifted tableaux are \emph{shifted dual equivalent} if they have the same shape after applying any sequence of shifted \textit{jeu de taquin} slides to both.

\subsection{The shifted evacuation and reversal}\label{subsectevacrev}\label{sub23}

In this section, we recall an involution on semistandard shifted tableaux of straight shape, known as the \emph{(shifted) evacuation}, which preserves shape and reverts weight. This involution was firstly presented by Worley \cite{Wor84}, as an analogue of the Schützenberger involution \cite{Schu76}. We use Worley's definition, however, we remark that Choi, Nam and Oh \cite[Section 5]{CNO17} gave another formulation using the shifted switching process, and proved the coincidence of both.

Given $T\in \mathsf{SShT}(\lambda/\mu, n)$, the tableau $T^*$ is obtained by reflecting $T$ along the anti-diagonal in the shifted stair shape $\delta = (\lambda_1, \lambda_1 -1, \ldots, 1)$, while complementing the entries by $i \mapsto (n-i+1)'$ and $i' \mapsto n-i+1$. Note that if $T$ is of shape $\lambda/\mu$, then $T^*$ is of shape $ \mu^{\vee} / \lambda^{\vee}$, and $wt(T^*) = wt(T)^{\mathsf{rev}}$. If $T$ is a tableau with straight shape, then $T^E := rect (T^*)$ and the operator $E$ is called the \emph{(shifted) evacuation} \cite[ Definition 7.1.5]{Wor84}. For example, if $T= \begin{ytableau}
1 & 1 & {2'} & 2\\
\none & 2 & 3
\end{ytableau}$, then 
$T^E = \begin{ytableau}
1 & {2'} & 2 & 3\\
\none & 2 & 3
\end{ytableau}$. Moreover, $T^E$ has the same shape as $T$ and $(T^E)^E = T$ \cite[Lemma 7.1.6]{Wor84}. As a consequence of the uniqueness of $Y_{\nu}$ we have that $Y_{\nu}^E$ is the unique one of shape $\nu$ and weight $\nu^{\mathsf{rev}}$.

It is due to Haiman that, given $T \in \mathsf{SShT}(\lambda/\mu,n)$, there is a unique tableau $T^e \in \mathsf{SShT}(\lambda/\mu,n)$, the \emph{reversal} of $T$, that is shifted Knuth equivalent to $T^*$ and dual equivalent to $T$ \cite[Theorem 2.13]{Haim92}. Since the operator $*$ preserves shifted Knuth equivalence \cite[Lemma 7.1.4]{Wor84}, the reversal operation is the coplactic extension of  evacuation, in the sense that, we may first rectify $T$, apply evacuation, and then outer \textit{jeu de taquin} slides (in the order defined by the previous rectification) to get $T^e$ with  the shape of $T$. Hence, $T^E = T^e$ for tableaux of straight shape. The reversal is a shape-preserving weight-reversing involution on shifted tableaux and it yields a bijection $T \longmapsto (T^e)^*$ between the set of LRS tableaux of shape $\lambda/\mu$ and weight $\nu$ and the set of LRS tableaux of shape $\mu^{\vee}/\lambda^{\vee}$ and weight $\nu$. Hence, we have the symmetry $f_{\mu\nu}^{\lambda} = f_{\lambda^{\vee}\nu}^{\mu^{\vee}}$.

\section{A crystal-like structure on shifted tableaux}\label{sec3}
After recalling the set up on the shifted tableau crystal $\mathcal{B}(\lambda/\mu, n)=\mathsf{SShT}(\lambda/\mu, n)$, introduced in \cite{GL19,GLP17}, we define, for each $i\in I = [n-1]$, the shifted crystal reflection operator, using the primed and unprimed crystal operators. Example \ref{exbraid} shows that these do not need to satisfy the braid relations, thus not yielding a natural action of $\mathfrak{S}_n$ on this crystal.

Let $\{e_1, \ldots,e_n\}$ be the canonical basis of $\mathbb{R}^n$. Given words $w$ and $v$ on the alphabet $[n]'$, and $i \in I$, the \emph{primed raising operator} $E_i'(w)$ is the unique word with the same standardization of $w$ and such that $wt(E_i'(w)) = wt(w) + \alpha_i$, where $\alpha_i = e_i - e_{i+1}$, if such word exists, otherwise $E'_i(w)= \emptyset$ \cite[Definition 3.3]{GLP17}.
The \emph{primed lowering operator} $F_i'(w)$ is defined  analogously using $-\alpha_i$. These notions are well defined  \cite[Lemma 3.2]{GLP17}, and as a direct consequence, $E_i'(w) = v$ if and only if $w=F_i' (v)$, for any words $w$ and $v$  \cite[Proposition 3.4]{GLP17}. The definition is extended to semistandard shifted tableaux,  $E_i' (T)$ being the shifted tableau with the same shape of $T$ and reading word $E_i' (w(T))$. These operators are well defined and are coplactic \cite[Propositions 3.6 and 3.7]{GLP17}.

The \emph{unprimed raising operators} are defined by giving conditions on the lattice walks of a word. These are extended to shifted tableaux as before. Due to lack of space, we omit these definitions and refer to \cite[Section 5.1]{GLP17} for details. The unprimed operators are coplactic and, given $T \in \mathsf{SShT}(\lambda/\mu,n)$, $E_i (T)$ and $F_i(T)\in  \mathsf{SShT}(\lambda/\mu,n)$, for all $i\in I$, whenever they are defined \cite[Theorems 5.18 and 5.35]{GLP17}.

Tableaux that differ by a sequence of unprimed raising or lowering operators are dual equivalent \cite[Corollary 5.33]{GLP17}. This is also true for the primed operators, since the standardization is unchanged. Hence, tableaux that differ by a sequence of any lowering or raising operators are dual equivalent.
 
\begin{prop}[\cite{GLP17}, Proposition 6.4]\label{highuni}
Let $\nu$ be a strict partition. The unique $T\in \mathsf{SShT}(\nu,n)$ for which $E_{i}(T) = E_i' (T)=\emptyset$, for all $i \in I$, is $Y_{\nu}$. Then, every $T\in \mathsf{SShT}(\nu,n)$ may be obtained from every other by a sequence of primed and unprimed lowering and raising operators.
\end{prop}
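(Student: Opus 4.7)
The plan is to establish the uniqueness statement (the first sentence) and then deduce the connectedness statement from it as a formal consequence.

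First, I verify directly that $Y_\nu$ is killed by every raising operator. For $E_i'$, applying this operator must turn some $\mathbf{i+1}$ into $\mathbf{i}$ without changing the standardization; but the only $\mathbf{i+1}$-entries of $Y_\nu$ lie in row $i+1$, each sitting directly below a row-$i$ entry equal to $i$, so replacing any by $i$ or $i'$ would violate column-strictness or the at-most-one-prime-per-column constraint. For $E_i$, I appeal to the lattice-walk definition from \cite[Section 5.1]{GLP17} applied to the restriction to the subalphabet $\{i, i', (i+1)', i+1\}$: the corresponding subword of $w(Y_\nu)$ is $(i+1)^{\nu_{i+1}} i^{\nu_i}$, whose associated lattice walk has no raising step available.

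Next, I show that $Y_\nu$ is the only highest-weight element. The strategy is to prove that every $T \in \mathsf{SShT}(\nu,n)$ with $T \ne Y_\nu$ admits an applicable raising operator. I would take the smallest $i$ for which the content of $T$ in rows $\le i$ fails to match that of $Y_\nu$; equivalently, the reading word of $T$ ceases to be ballot at letters $\{i,i+1\}$. Restricting to the subalphabet $\{i, i', (i+1)', i+1\}$ reduces the problem to a two-letter primed analysis, in which the first non-ballot position in $w(T)$ produces explicitly either an applicable $E_i'$ (when the offending $\mathbf{i+1}$ may be demoted while preserving the standardization) or an applicable $E_i$ (when the lattice-walk condition is met). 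Because both families of operators are coplactic and the primed operators preserve standardization, this local move lifts back to a genuine raising operator on $T$. Iterating strictly increases $wt_1+\cdots+wt_j$ in the dominance order, and since $\mathsf{SShT}(\nu,n)$ is finite the process terminates at a highest-weight tableau; its shape and weight are both $\nu$, which forces it to be $Y_\nu$ by the uniqueness of the Yamanouchi tableau among tableaux of shape and weight equal to $\nu$.

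For the second sentence, the argument is now formal. Starting from any $T$, iterate raising operators to reach the unique highest-weight element, which must be $Y_\nu$; reversing the recorded sequence (using the reversibility $E_i'(w)=v \Leftrightarrow w = F_i'(v)$ from \cite[Proposition 3.4]{GLP17} and the analogous statement for the unprimed operators) expresses $T$ as a sequence of lowering operators applied to $Y_\nu$. Thus every pair of tableaux in $\mathsf{SShT}(\nu,n)$ is connected via $Y_\nu$.

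The main obstacle is the middle step: for skew tableaux this kind of argument can break into several connected components (indexed by LRS tableaux), so the delicate point is verifying that in the straight-shape case no obstruction arises and that the two-letter restriction genuinely locates an applicable raising operator whenever $T \ne Y_\nu$. The interplay between the primed operators (controlled by standardization) and the unprimed operators (controlled by lattice walks) must be handled together, since neither family alone suffices to move every non-highest-weight tableau upward.
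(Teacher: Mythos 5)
The paper does not actually prove this proposition---it is imported verbatim from \cite{GLP17} (Proposition 6.4)---and your blind reconstruction follows the same route as that source: verify that $Y_{\nu}$ kills all raising operators, show that any $T \neq Y_{\nu}$ admits an applicable $E_i$ or $E_i'$ by restricting to the two-letter subalphabet $\{i', i, (i+1)', i+1\}$ (the rank-one analysis), and deduce connectedness from termination of the weight-increasing iteration together with the reversibility $E_i'(w) = v \Leftrightarrow w = F_i'(v)$ and its unprimed analogue. Be aware, though, that the step you flag as delicate---that a failure of ballotness genuinely produces an applicable raising operator, via the interplay of standardization for the primed operators and lattice walks for the unprimed ones---is precisely the technical heart of \cite{GLP17}'s Sections 5--6 and remains a sketch in your write-up (and your closing remark that the terminal tableau has ``weight $\nu$'' does not follow from the iteration alone; it is the uniqueness claim itself, so the argument should end at the terminal tableau being highest weight and invoking the crux claim directly).
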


The set $\mathsf{SShT}(\lambda/\mu,n)$ is closed under the operators $E_i, E_i', F_i, F_i'$, for  $i \in I$.
We also have  \emph{partial length functions} \cite{GL19} given by:
\begin{align*}
\varepsilon_i'(T) &:= max\{k:\, E_i'^k(T) \neq \emptyset\} \qquad \widehat{\varepsilon_i} (T) := max\{k:\, E_i^k(T) \neq \emptyset\}\\
\varphi_i'(T) &:= max\{k:\, F_i'^k(T) \neq \emptyset\} \qquad \widehat{\varphi_i} (T) := max\{k:\, F_i^k(T) \neq \emptyset\},
\end{align*}
and \emph{total length functions} $\varepsilon_i (T)$ and $\varphi_i(T)$, defined in \cite[Section 5.1]{GLP17} via the $i$-lattice walk of $T$, for $i\in I$. The set $\mathsf{SShT}(\lambda/\mu,n)$, together with primed and unprimed operators, length functions, and weight function, is called a \emph{shifted tableau crystal} and denoted by $\mathcal{B}(\lambda/\mu,n)$. It may be regarded as a directed graph with weighted vertices, and $i$-coloured double edges, the solid ones being labelled with $i$ and the dashed ones with $i'$. For each $i\in I$, $\mathcal{B}(\lambda/\mu,n)$ may be partitioned as a set into the $\{i',i\}$-connected components underlying subsets, which are called $i$-\emph{strings}, with two possible arrangements \cite[Section 3.1]{GL19}:

\begin{center}
\includegraphics[scale=0.7]{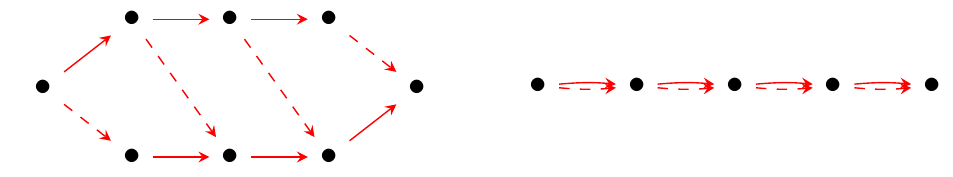}
\end{center}

The left one is called a \emph{separated $i$-string}, consisting of two $i$-labelled chains of equal length connected by $i'$-labelled edges. The smallest separated string is formed by two vertices connected by a $i'$ labelled edge. The one on the right is called a \emph{collapsed $i$-string} and is formed by a double chain both $i$- and $i'$-labelled. A single vertex is considered a collapsed string. The total length functions  can be easily formulated  in terms of $i$-doubled strings, (analogous for $\varphi_i$), $i\in I$:
$$\varepsilon_i(T) =
\begin{cases*}
\widehat{\varepsilon_i}(T)= \varepsilon_i' (T)& if $T$ is in a collapsed $i$-string\\
\widehat{\varepsilon_i}(T) + \varepsilon_i' (T) & if $T$ is in a separated $i$-string.
\end{cases*}$$

A \emph{highest weight element} (respectively \emph{lowest weight element}) of $\mathcal{B}(\lambda/\mu,n)$ is a tableau $T$ such that $E_i(T) = E_i' (T) = \emptyset$ (respectively $F_i (T) = F_i' (T) = \emptyset$), for any $i \in I$. 

\begin{prop}[\cite{GLP17}, Corollary 6.5]\label{isomhigh}
Each connected component of $\mathcal{B}(\lambda/\mu,n)$ has a unique highest weight element $T^{\mathsf{high}}$, which is a LRS tableau, and is isomorphic, as a weighted edge-labelled graph, to the shifted tableau crystal $\mathcal{B}(\nu,n)$, where $\nu = wt(T^{\mathsf{high}})$.
\end{prop}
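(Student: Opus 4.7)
The plan is to transfer Proposition \ref{highuni}, which pins down the unique highest weight element of each straight-shape crystal, to an arbitrary skew component by means of the shifted rectification, exploiting the fact that all the operators $E_i, E_i', F_i, F_i'$ are coplactic. The central map will be $rect$, which restricts to a map from any connected component $\mathcal{C}$ of $\mathcal{B}(\lambda/\mu,n)$ into a disjoint union of straight-shape crystals.

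For existence, fix $T \in \mathcal{C}$ and let $\nu$ be the strict shape of $rect(T)$. By Proposition \ref{highuni}, there is a sequence $\omega$ of raising operators taking $rect(T)$ to $Y_\nu$. Coplacticity lets us apply the same sequence to $T$, producing $T^{\mathsf{high}} := \omega(T) \in \mathcal{C}$ with $rect(T^{\mathsf{high}}) = Y_\nu$; hence $T^{\mathsf{high}}$ is an LRS tableau of weight $\nu$. If some $E_i$ or $E_i'$ were still applicable to $T^{\mathsf{high}}$, coplacticity would lift it to a raising operator on $Y_\nu$, contradicting Proposition \ref{highuni}, so $T^{\mathsf{high}}$ is a highest weight element of $\mathcal{C}$.

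For uniqueness, suppose $T^{\mathsf{high}}, T' \in \mathcal{C}$ are both highest weight. A chain of operators joining them rectifies, by coplacticity, to a chain between $rect(T^{\mathsf{high}})$ and $rect(T')$; since rectifications of highest weight elements remain highest weight and each $\mathcal{B}(\nu,n)$ is connected with unique top $Y_\nu$ (Proposition \ref{highuni}), we obtain $rect(T^{\mathsf{high}}) = rect(T') = Y_\nu$ for a common $\nu$. Thus $T^{\mathsf{high}}$ and $T'$ are shifted Knuth equivalent; they are also shifted dual equivalent, since one is reached from the other by raising and lowering operators, as noted in Section \ref{sec3}. Haiman's uniqueness theorem \cite[Theorem 2.13]{Haim92}, recalled in Section \ref{subsectevacrev}, then forces $T^{\mathsf{high}} = T'$.

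For the isomorphism, I would verify that $rect:\mathcal{C} \to \mathcal{B}(\nu,n)$ is a bijection of weighted edge-labelled graphs: surjectivity follows from the same lifting argument applied to lowering-operator sequences issuing from $Y_\nu$ and ending at an arbitrary $S \in \mathcal{B}(\nu, n)$; injectivity is again Haiman's theorem (two elements of $\mathcal{C}$ with the same rectification are Knuth equivalent, and being in the same component they are dual equivalent, hence equal). Coplacticity then ensures that $rect$ intertwines every $E_i, E_i', F_i, F_i'$ and preserves weights, both colours of edges, and all length functions. I expect the main obstacle to be the uniqueness step, where coplacticity must be carefully combined with Haiman's dual-equivalence-plus-Knuth-equivalence characterization to rule out two distinct highest weight elements inside a single component; once this is handled, both existence and the isomorphism follow straightforwardly from Proposition \ref{highuni}.
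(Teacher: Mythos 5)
Your proposal is correct, and it reconstructs essentially the argument behind this result, which the paper itself does not prove but imports from Gillespie--Levinson--Purbhoo \cite[Corollary 6.5]{GLP17}: rectification intertwines the coplactic operators $E_i, E_i', F_i, F_i'$, Proposition \ref{highuni} pins down $Y_\nu$ at the top of the straight-shape crystal, and Haiman's dual-equivalence-plus-Knuth-equivalence uniqueness theorem gives injectivity (your use of \cite[Corollary 5.33]{GLP17} rather than Proposition \ref{compdual} correctly avoids circularity, since the latter is downstream of this very statement). The only cosmetic point is that Proposition \ref{highuni} literally provides mixed raising/lowering sequences, so your purely-raising sequence $\omega$ should be obtained by the standard termination argument (apply raising operators until none apply, then invoke uniqueness of the highest weight element).
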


\begin{prop}[\cite{GLP17}, Corollary 6.6]\label{compdual}
Each connected component of $\mathcal{B}(\lambda/\mu,n)$ forms a shifted dual equivalence class.
\end{prop}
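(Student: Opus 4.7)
The forward direction is immediate from the remark immediately preceding Proposition~\ref{highuni}: tableaux differing by a single primed or unprimed raising/lowering operator are shifted dual equivalent, so iterating along any path in the underlying $\{E_i,E_i',F_i,F_i'\}$-graph shows that any two tableaux in the same connected component of $\mathcal{B}(\lambda/\mu,n)$ lie in a common shifted dual equivalence class.

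For the converse, let $T,T'\in \mathcal{B}(\lambda/\mu,n)$ be shifted dual equivalent. My plan is to pass to the highest weight elements $U$ and $U'$ of the connected components of $T$ and $T'$, which exist and are LRS tableaux of shape $\lambda/\mu$ by Proposition~\ref{isomhigh}. Applying the forward direction, $U$ is dual equivalent to $T$ and $U'$ is dual equivalent to $T'$; by transitivity $U$ and $U'$ are dual equivalent to each other. Since dual equivalence preserves the shape produced by any fixed sequence of \emph{jeu de taquin} slides, in particular after a full rectification, $\mathsf{rect}(U)$ and $\mathsf{rect}(U')$ have the same shape. Both are Yamanouchi tableaux (as $U$ and $U'$ are LRS), so by the uniqueness of $Y_{\nu}$ we obtain $\mathsf{rect}(U)=Y_{\nu}=\mathsf{rect}(U')$ for a common strict partition $\nu$, whence $U$ and $U'$ are also shifted Knuth equivalent.

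It then remains to show that two tableaux in $\mathsf{SShT}(\lambda/\mu,n)$ which are simultaneously shifted Knuth equivalent and shifted dual equivalent must coincide. This is the uniqueness statement underlying Haiman's theorem \cite[Theorem 2.13]{Haim92}, recalled in Section~\ref{sub23} to define the reversal: a tableau of prescribed shape is determined by its shifted Knuth class together with its shifted dual equivalence class. Consequently $U=U'$, so $T$ and $T'$ sit in the same connected component, proving the converse.

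The main obstacle I anticipate is the careful formulation and invocation of Haiman's uniqueness in the generality needed here: in \cite{Haim92} it is stated for the specific pair $(T,T^*)$ used to define the reversal, but the underlying fundamental theorem in fact asserts that shape, Knuth class, and dual equivalence class jointly determine a shifted semistandard tableau. Once this is in hand, all remaining steps are direct consequences of Propositions~\ref{highuni} and \ref{isomhigh} together with the coplacticity of the primed and unprimed operators.
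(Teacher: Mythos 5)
Your proof is correct. Note that the paper itself offers no proof of this statement: it is imported verbatim from \cite[Corollary 6.6]{GLP17}, so there is no in-paper argument to compare against; your two-inclusion argument is a faithful reconstruction of the standard one. The forward inclusion is exactly the remark preceding Proposition \ref{highuni} (via \cite[Corollary 5.33]{GLP17} plus the observation that primed operators preserve standardization), and your converse correctly isolates the crux: two LRS tableaux of the same skew shape that are dual equivalent rectify, by dual equivalence, to Yamanouchi tableaux of the same straight shape, hence to the same $Y_\nu$, and are therefore Knuth equivalent; Haiman's uniqueness then forces them to coincide. This last step also correctly rules out the delicate case where two \emph{distinct} components share the same highest weight $\nu$ (which happens whenever $f^{\lambda}_{\mu\nu}\geq 2$): such highest weights are Knuth equivalent but, by uniqueness, cannot be dual equivalent. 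On the one obstacle you flag: you are right that \cite[Theorem 2.13]{Haim92}, as quoted in Section \ref{sub23}, is phrased as existence and uniqueness of the reversal for the pair $(T, T^*)$, but the needed general statement (same shape, shifted Knuth equivalent, and shifted dual equivalent imply equality) is part of Haiman's theory, and can even be closed with tools already cited in the paper: rectify both tableaux along the same sequence of inner corners; dual equivalence forces identical shapes at every step, Knuth equivalence and confluence of shifted rectification \cite[Theorem 11.1]{Sag87} force equal rectifications, and reversing the (shape-determined) slide sequence recovers both tableaux from the same straight-shape tableau, so they are equal.
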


\subsection{Schützenberger involution and the shifted reflection operators}
	
The Schützenberger (or Lusztig) involution is defined on the  shifted tableau crystal \cite[Section 2.3.1]{GL19} in the same fashion as for type $A$ Young tableau crystals. We realize it through shifted evacuation, for tableaux of straight shape, and through shifted reversal otherwise. Throughout this section $\nu$ will denote a strict partition.

\begin{defin}
Let $\mathcal{B}(\nu,n)$ be the shifted tableau crystal with highest weight $T^{\mathsf{high}}=Y_\nu$ and  lowest weight $T^{\mathsf{low}}=Y_\nu^E$. The \emph{Schützenberger involution} $\eta: \mathcal{B}(\nu,n) \longrightarrow \mathcal{B}(\nu,n)$ is the unique map that satisfies the following conditions, for all $T\in \mathcal{B}(\nu,n)$ and $i \in I$:

1. $E'_i \eta (T) = \eta F'_{n-i} (T)$ and $E_i \eta (T) = \eta F_{n-i} (T)$.

2. $F'_i \eta (T) = \eta E'_{n-i} (T)$ and $F_i \eta (T) = \eta E_{n-i} (T)$.

3. $wt(\eta(T)) = wt(T)^{\mathsf{rev}}$.

\end{defin}

In particular, $\eta(T^{\mathsf{high}}) = T^{\mathsf{low}}$ and we have $\varphi_i (T) = \varepsilon_{n-i} \eta (T)$ and $\varepsilon_i (T) = \varphi_{n-i} \eta (T)$. Due to Proposition \ref{isomhigh}, the involution $\eta$ may be defined in $\mathcal{B}(\lambda/\mu,n)$, by extending it to its connected components. In either cases, we denote it by $\eta$. This is indeed well defined by the next result.

\begin{prop}
The Schützenberger involution $\eta$ coincides with the evacuation $E$ in $\mathcal{B}(\nu,n)$, and with the reversal $e$ in $\mathcal{B}(\lambda/\mu,n)$.
\end{prop}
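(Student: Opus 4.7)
The plan is to verify that evacuation $E$ on $\mathcal{B}(\nu,n)$ and reversal $e$ on $\mathcal{B}(\lambda/\mu,n)$ each satisfy the three defining conditions of $\eta$, and then to invoke the uniqueness of such a map.

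For the straight-shape case, I would first dispatch the weight condition using $wt(T^*) = wt(T)^{\mathsf{rev}}$ together with the fact that rectification, being a sequence of \textit{jeu de taquin} slides, preserves weight; hence $wt(T^E) = wt(T)^{\mathsf{rev}}$. The substantive point is a commutation lemma between the operator $*$ and the crystal operators with the index flip $i \mapsto n-i$, namely the identities
\[
(E_i'(T))^* = F_{n-i}'(T^*), \qquad (E_i(T))^* = F_{n-i}(T^*),
\]
together with the raising/lowering swaps. The primed case is essentially bookkeeping: $*$ respects standardization classes, and adding $\alpha_i$ to $wt(T)$ corresponds under reversal of weights to subtracting $\alpha_{n-i}$ from $wt(T^*)$, so the characterisation of $E_i'$ as the unique word of the correct standardization and weight translates into the claim. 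The unprimed case has to be read off the $i$-lattice walk description of \cite[Section 5.1]{GLP17}, tracking how reflection along the antidiagonal combined with the complementation $i \mapsto (n-i+1)'$, $i' \mapsto n-i+1$ reverses each $i$-walk into an $(n-i)$-walk with raising and lowering brackets exchanged. Once these commutations are established, I can apply rectification to both sides and use the coplacticity of the primed and unprimed operators to deduce $E_i'\, E = E\, F_{n-i}'$, $E_i\, E = E\, F_{n-i}$, and analogously for the remaining two axioms.

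For the skew case, the cleanest approach is to transport the result along the isomorphism of Proposition \ref{isomhigh}: each connected component of $\mathcal{B}(\lambda/\mu,n)$ is, as a weighted edge-labelled graph, isomorphic to some $\mathcal{B}(\nu,n)$ via rectification. The reversal $e$ is precisely the coplactic extension of evacuation, in the sense that $T^e$ is the unique tableau of the same shape as $T$ that is Knuth equivalent to $T^*$ and dual equivalent to $T$ \cite[Theorem 2.13]{Haim92}, so $\mathrm{rect}(T^e) = (\mathrm{rect}(T))^E$. Since the primed and unprimed operators commute with rectification, the intertwining identities verified for $E$ on the straight-shape component pull back to the same identities for $e$ on the skew component, and the weight axiom transports since rectification preserves weight.

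Uniqueness then closes the argument: by Proposition \ref{highuni}, every vertex of a connected component is reachable from the unique highest weight element by a composition of lowering operators, and the three axioms determine $\eta$ on the highest weight element (it must be the unique lowest weight element $Y_\nu^E$ of reversed weight) and then propagate that value to every other vertex. Therefore any map satisfying the three conditions must agree with $E$ on $\mathcal{B}(\nu,n)$ and with $e$ on $\mathcal{B}(\lambda/\mu,n)$. The main obstacle is the commutation of $*$ with the unprimed operators, since unlike their primed counterparts they are not pinned down by a simple weight-and-standardization recipe and must be analysed through the lattice-walk combinatorics.
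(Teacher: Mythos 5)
The paper itself states this proposition without proof --- it is an extended abstract --- so there is no in-house argument to compare against line by line; judged on its own, your plan is correct and is exactly the argument the paper's framework sets up. The uniqueness half is sound: by Proposition \ref{highuni} every vertex of $\mathcal{B}(\nu,n)$ is reached from $Y_\nu$ by lowering operators, the axioms force $\eta(Y_\nu)$ to be the lowest weight element, and the intertwining relations then propagate the value everywhere (with the tacit convention $\eta(\emptyset)=\emptyset$, which you should state, since the axioms are equalities of possibly undefined expressions and definedness must transfer). Your primed-operator commutation $(E_i'(w))^* = F_{n-i}'(w^*)$ is genuinely complete as sketched: $*$ acts on reading words as reverse complement, so $\mathsf{std}(w^*)$ is a function of $\mathsf{std}(w)$ alone, and $(wt(w)+\alpha_i)^{\mathsf{rev}} = wt(w^*)-\alpha_{n-i}$, so the uniqueness clause in the definition of the primed operators (including the $\emptyset$ cases, which match in both directions) yields the identity. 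The skew case also transports correctly, since reversal is the coplactic extension of evacuation, the operators commute with rectification, and rectification is injective on each connected component because components are dual equivalence classes (Proposition \ref{compdual}), which is what makes the identification of Proposition \ref{isomhigh} concrete. The one place where you have a plan rather than a proof is the unprimed commutation $(E_i(w))^* = F_{n-i}(w^*)$: unlike the primed case there is no weight-plus-standardization characterization to invoke, and one must actually verify, from the definitions in \cite[Section 5.1]{GLP17}, that reverse complementation turns the $i$-lattice walk of $w$ into the $(n-i)$-walk of $w^*$ with the first/last critical-arrival conditions for raising and lowering exchanged --- including the existence statement, not just the value when defined. That lemma is true and your mechanism for it is the right one, but as written it is the sole substantive gap between your proposal and a complete proof; you identify it as such, which is accurate.
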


We now introduce a shifted version of the \textit{crystal reflection operators} $\sigma_i$ (\cite[Definition 2.35]{BumpSchi17}) on $\mathcal{B}(\nu,n)$, for each $i\in I$. In type $A$ Young tableau crystals, these are involutions on the crystal, so that each $i$-string is sent to itself by reflection over its middle axis, for all $i \in I$. It coincides with the restriction of the Schützenberger involution to the tableaux consisting of the letters  $i,i+1$, ignoring the remaining ones. On $\mathcal{B}(\nu,n)$, collapsed strings are similar to the $i$-strings of type $A$ crystals, hence the shifted reflection operator $\sigma_i$ is expected to resemble the one for Young tableaux. However, for separated strings, a sole reflection of the $i$-string would not coincide with the restriction of the Schützenberger involution to $\{i,i+1\}'$, hence we have the next definition.

\begin{defin}[Shifted crystal reflection operators]\label{shrefop}
Let $i\in I$ and $T\in \mathcal{B}(\nu,n)$. Let $k= \langle w(T),\alpha_i \rangle$ (usual inner product in $\mathbb{R}^n$). Define $\sigma_i(T)= T$, if $F_i(T)=F_i'(T)=\emptyset$, and otherwise according to the table below:

\begin{center}
\begin{tabular}{c|l|l}
\hspace{3em} & $F_i'(T) \neq \emptyset$ & $F_i' (T) = \emptyset$ \\ 
\hline 
\rule[-2ex]{0pt}{4.5ex}
if $k > 0$ & $F_i' F_i^{k-1} (T)$ & $E_i' F_i^{k+1} (T)$ \\ 
\rule[-2ex]{0pt}{3.5ex}
if $k = 0$ & $E_i F_i' (T)$ & $E_i' F_i (T)$ \\ 
\rule[-2ex]{0pt}{3.5ex}
if $k<0$ & $E_i^{-k+1} F_i' (T)$ & $E_i^{-k-1} E_i'(T)$ \\  
\end{tabular} 
\end{center}
\end{defin}
As the definition suggests, the shifted reflection operator $\sigma_i$ must do a double reflection, by vertical and horizontal middle axes (see Figure \ref{fig:flip}). By coplacity, the operator $\sigma_i$ is extended to $\mathcal{B}(\lambda/\nu,n)$, for $i\in I$.

\begin{figure}[h]
\centering
\includegraphics[scale=0.7]{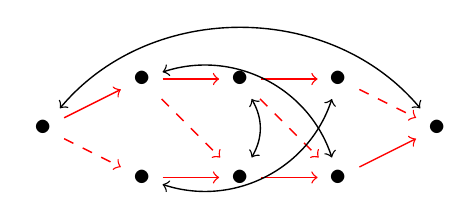}
\includegraphics[scale=0.7]{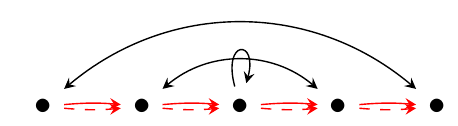}
\caption{The action of a crystal reflection operator in separated and collapsed strings, which corresponds to the Schützenberger involution.}
\label{fig:flip}

\end{figure}

\begin{prop}\label{sigmai}
For $i \in I$ and $T \in \mathcal{B}(\lambda/\mu,n)$, the operator $\sigma_i$ satisfies the following:

1. $\sigma_i$ sends each connected component of $\mathcal{B}(\lambda/\mu,n)$ to itself.

2. $\sigma_i$ takes each $i$-string to itself.

3. $\sigma_i^2 = id$ and $\sigma_i\sigma_j=\sigma_j\sigma_i$, if $|i-j| >1$.

4. $wt(\sigma_i(T)) = s_i \cdot wt(T)$, where $s_i = (i,i+1) \in \mathfrak{S}_n$.

\end{prop}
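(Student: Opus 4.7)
The plan is to exploit coplacity: since each of the primed and unprimed operators is coplactic and shape-preserving, so is $\sigma_i$, and by Proposition \ref{isomhigh} every connected component of $\mathcal{B}(\lambda/\mu,n)$ is isomorphic, as a weighted edge-labelled graph, to some $\mathcal{B}(\nu,n)$. Combined with Proposition \ref{compdual}, this reduces each of the four claims to the straight-shape case $\mathcal{B}(\nu,n)$, where the $i$-strings have the two canonical forms of Section \ref{sec3}.

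Part (4) is immediate: each application of $F_i$ or $F_i'$ subtracts $\alpha_i$ from the weight, while each application of $E_i$ or $E_i'$ adds $\alpha_i$. Summing contributions across each of the six cases in Definition \ref{shrefop} yields $wt(\sigma_i(T)) = wt(T) - k\alpha_i$, where $k = \langle wt(T),\alpha_i\rangle = wt_i - wt_{i+1}$, which is precisely $s_i\cdot wt(T)$. For parts (1) and (2), observe that the $i$-string containing $T$ is by definition its $\{i',i\}$-connected subgraph; since $\sigma_i(T)$ is obtained by composing the edge operators $E_i, E_i', F_i, F_i'$, its image lies in the same $i$-string, hence in the same connected component.

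The bulk of the work is part (3). For $\sigma_i^2 = \mathrm{id}$, I would proceed by case analysis on the shape of the $i$-string (collapsed or separated) and on the position of $T$ within it, following Figure \ref{fig:flip}. On a collapsed $i$-string the primed and unprimed edges are parallel along a single doubly-labelled chain, and $\sigma_i$ restricts to the classical type $A$ crystal reflection, for which the involution property is standard. On a separated $i$-string, one verifies that $\sigma_i$ performs a composite of the vertical reflection (swapping the two parallel $i$-chains, effected by the $E_i'/F_i'$ edges between them) with the horizontal reflection (along each $i$-chain), and that the two commute. The main obstacle is the bookkeeping: one must track in which of the six cases of the definition the image $\sigma_i(T)$ falls, using that $\langle wt(\sigma_i(T)),\alpha_i\rangle = -k$ and that the condition $F_i'(\sigma_i(T)) = \emptyset$ flips appropriately; once matched, each case of the definition applied to $\sigma_i(T)$ pairs with the "inverse" case applied to $T$ and recovers $T$.

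Finally, the commutativity $\sigma_i\sigma_j = \sigma_j\sigma_i$ for $|i-j| > 1$ reduces to the pairwise commutativity of $\{E_i,E_i',F_i,F_i'\}$ with $\{E_j,E_j',F_j,F_j'\}$ for such indices. This is inherent to the lattice-walk definition of the unprimed operators and to the standardization-invariance of the primed ones: neither family touches the letters $\mathbf{j},\mathbf{j{+}1}$ used by the other, and the length $\langle wt(\cdot),\alpha_i\rangle$ is preserved by any $E_j, F_j, E_j', F_j'$ with $|i-j|>1$, so the case selection in Definition \ref{shrefop} is compatible with swapping the order of $\sigma_i$ and $\sigma_j$.
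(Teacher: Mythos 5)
Your proposal is correct and follows essentially the same route as the paper: the paper proves a representative case of $\sigma_i^2=\mathrm{id}$ (namely $k>0$, $F_i'(T)\neq\emptyset$) by exactly the bookkeeping you describe --- computing $wt(\sigma_i(T))=wt(T)-k\alpha_i$, observing $\langle wt(\sigma_i(T)),\alpha_i\rangle=-k$ and that the $F_i'$-vanishing condition switches, then cancelling $E_i^{k-1}E_i'F_i'F_i^{k-1}=\mathrm{id}$ --- and leaves the remaining cases and assertions to the same routine checks you outline. Your write-up is, if anything, more complete than the paper's sketch (you handle the weight claim, string/component stability, and the commutation for $|i-j|>1$ explicitly); the only caution is that the $F_i'$-condition does not literally flip in every case (the middle vertex of a collapsed string with $k=0$ is a self-paired fixed point), which your hedge ``flips appropriately'' should be understood to cover.
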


\begin{proof}
We prove the first part of the third assertion, with the case where $k>0$ and $F_i(T)\neq \emptyset$. Let $S = \sigma_i (T) = F_i' F_i^{k-1} (T)$. Then, $F_i' (S) = \emptyset$. By definition of $\sigma_i$, we prove that $wt (S) = wt(T) - k \alpha_i$. Moreover, it is easy to check that $\tilde{k} :=\langle wt(S),\alpha_i \rangle = wt(S)_i - wt(S)_{i+1} <0$. Hence, we may show that $\sigma_i(S) = \sigma_i^2 (T) =   E_i^{k-1} F_i^{k-1} (T) = T$.
\end{proof}

In what follows, $T^i$ denotes the shifted tableau obtained from $T \in \mathsf{SShT}(\lambda/\mu,n)$ considering only the boxes filled with $i'$ or $i$.

\begin{teo}\label{sigmarever}
Let $T\in \mathcal{B}(\lambda/\mu,n)$ and let $T^{i,i+1} := T^{i} \sqcup T^{i+1}$. Then,
$$\sigma_i(T) = T^{1} \sqcup \ldots \sqcup T^{i-1} \sqcup (T^{i,i+1})^e \sqcup T^{i+1} \sqcup \ldots \sqcup T^{n}.$$
\end{teo}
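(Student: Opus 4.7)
The plan is to reduce the identity to a computation inside a two-letter subcrystal. Since each of $E_i,F_i,E_i',F_i'$ only alters cells of $T$ whose entry lies in $\{i',i,(i{+}1)',i{+}1\}$, and $\sigma_i$ is built from these operators according to Definition \ref{shrefop}, the tableaux $T$ and $\sigma_i(T)$ agree on every cell whose entry is not $i$ or $i{+}1$. Hence $\sigma_i(T)^j = T^j$ for all $j\neq i,i{+}1$, and the theorem reduces to the single identity $\sigma_i(T)^{i,i+1} = (T^{i,i+1})^e$.

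For this I would view $T^{i,i+1}$ as a shifted tableau on the two-letter sub-alphabet $\{i',i,(i{+}1)',i{+}1\}$ and, relabelling $i\mapsto 1$ and $i{+}1\mapsto 2$, locate it in a connected component of a two-letter shifted crystal. Because the crystal operators are defined via reading words and depend only on the positions of the letters $i,i{+}1$, their action on $T$ restricts to the corresponding action on $T^{i,i+1}$; thus the $i$-string through $T$ in $\mathcal{B}(\lambda/\mu,n)$ is in natural bijection with the $1$-string through $T^{i,i+1}$ in the two-letter crystal, and the table in Definition \ref{shrefop} produces the same element in both. Inside the two-letter crystal, the case analysis of Definition \ref{shrefop} together with Figure \ref{fig:flip} shows that $\sigma_1$ flips each $1$-string via exactly the double reflection characterizing the Schützenberger involution $\eta$ for $n=2$ (which, by $E_1'\eta = \eta F_1'$, $E_1\eta = \eta F_1$ and weight reversal, is uniquely so determined). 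Hence $\sigma_1 = \eta$ on the two-letter crystal, and combining this with the Proposition identifying $\eta$ with the reversal $e$ yields $\sigma_1(T^{i,i+1}) = (T^{i,i+1})^e$, as required.

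The main obstacle I foresee is making the passage to the two-letter subcrystal rigorous, since the cells occupied by $T^{i,i+1}$ need not form a shifted skew shape of the standard form $\lambda'/\mu'$ — cells carrying other entries may be interspersed among them. I plan to handle this at the level of reading words and their standardizations: all of $E_i,E_i',F_i,F_i'$ and the reversal are coplactic and depend only on the reading word, so one may rectify $T^{i,i+1}$ to a straight shape, verify the identity there using Proposition \ref{highuni}, and transport the result back through the inverse \textit{jeu de taquin} slides using the coplacity of $\sigma_i$ and of reversal recalled in Section \ref{subsectevacrev}. A secondary technical task will be the explicit verification that the three rows of Definition \ref{shrefop} reproduce, on the collapsed and separated strings displayed in Figure \ref{fig:flip}, exactly the double reflection performed by $\eta$ on a two-letter string; this is in the same spirit as the computation sketched in the proof of Proposition \ref{sigmai}.
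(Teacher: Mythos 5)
Your proposal is correct, but after the shared first step (restriction to the letters $\{i',i,(i+1)',i+1\}$ and rectification via coplacity, which the paper performs identically) it settles the crux by a genuinely different route. The paper appeals to Haiman's characterization of reversal: $(T^{i,i+1})^e$ is the unique tableau shifted dual equivalent to $T^{i,i+1}$ and shifted Knuth equivalent to $(T^{i,i+1})^*$; the dual equivalence comes for free because $T$ and $\sigma_i(T)$ lie in the same connected component (Proposition \ref{compdual}), and the Knuth equivalence is then established by exhibiting explicit sequences of Knuth moves between the reading words. You instead show $\sigma_1 = \eta$ on the two-letter straight-shape crystal by checking that the table of Definition \ref{shrefop} realizes the double reflection, hence the intertwining relations $E_1'\sigma_1 = \sigma_1 F_1'$, $E_1\sigma_1 = \sigma_1 F_1$ and weight reversal, on each of the two string types, and then invoking the uniqueness clause in the definition of $\eta$ (valid since, by Proposition \ref{highuni}, $\mathcal{B}(\nu,2)$ is a single $1$-string) together with the proposition identifying $\eta$ with $e$. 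The string-by-string check does go through: on a separated string with chains $T_0,\dots,T_m$ and $S_0,\dots,S_m$, every row of the table yields $\sigma_1(T_j) = S_{m-j}$ and $\sigma_1(S_j) = T_{m-j}$, and similarly for collapsed strings. What your route buys is the elimination of hands-on Knuth-move computations in favour of a purely crystal-theoretic uniqueness argument; its cost is reliance on the proposition $\eta = e$, which this extended abstract states without proof --- the citation is logically legitimate, since that proposition precedes and does not depend on Theorem \ref{sigmarever}, but it displaces rather than removes the combinatorial substance, and your transport back to skew shapes via the coplactic-extension description of $e$ (Section \ref{sub23}) still implicitly invokes the same dual-equivalence uniqueness of Haiman.

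One minor correction: the obstacle you anticipate is vacuous. In a semistandard shifted tableau the cells with entries at most $\mathbf{j}$ always form a shifted diagram, since entries weakly increase along rows and columns; hence $T^{i,i+1}$ occupies a genuine skew shifted shape, the restricted reading word is the subword of $w(T)$ on the letters $\mathbf{i},\mathbf{i+1}$, and the restriction intertwines the crystal operators directly --- no word-level detour is needed, though the fallback you describe is sound and mirrors the paper's own reduction to rectified tableaux.
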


\begin{proof}
It suffices to prove this result for shifted tableaux on the primed alphabet of two adjacent letters. The raising and lowering operators are coplactic, so it is $\sigma_i$, thus the proof is done for rectified shifted tableaux. Furthermore, $T$ and $\sigma_i(T)$ are in the same $i$-string, hence by Proposition \ref{compdual}, $T$ and $\sigma_i(T)$ are shifted dual equivalent. It remains to show that $T^*$ and $\sigma_1(T)$ are shifted Knuth equivalent, which is done by exhibiting sequences of Knuth moves
between their words.
\end{proof}

Unlike the type $A$ crystals, the reflection operators $\sigma_i$ do not define an action of the symmetric group $\mathfrak{S}_n$ on $\mathcal{B}(\lambda/\mu,n)$ because the braid relations $\sigma_i \sigma_{i+1} \sigma_i = \sigma_{i+1} \sigma_i \sigma_{i+1}$ may not hold.

\begin{ex}\label{exbraid}
Let $\mathcal{B}(\lambda,3)$ where $\lambda=(5,3,1)$, and consider the semistandard shifted tableau
$T = \begin{ytableau}
1 & 1 & 1 & 1 & {3'}\\
\none & 2 & 2 & {3'}\\
\none & \none & 3\end{ytableau}$.
Then, we have
$
\sigma_1 \sigma_2 \sigma_1 (T)=
\begin{ytableau}
1 & 1 & 1 & 2 & 3\\
\none & 2 & {3'} & 3\\
\none & \none & 3\end{ytableau}
 \neq
 \begin{ytableau}
1 & 1 & 1 & {2'} & {3'}\\
\none & 2 & {3'} & 3\\
\none & \none & 3\end{ytableau}
= 	\sigma_2 \sigma_1 \sigma_2 (T)
 $

\end{ex}

However, we have the following result, as in \cite[Section 3.2]{AzMaCo09} for ordinary LR tableaux, ensuring that the longest permutation of $\mathfrak{S}_n$ acts on a connected component of $\mathcal{B}(\lambda/\mu,n)$ by sending the highest weight element to the lowest weight element.

\begin{teo}\label{sigmalongperm}
Let $T$ be a LRS tableau in $\mathcal{B}(\lambda/\mu,n)$. Let $\omega_0 = s_{i_1} \cdots s_{i_k}$ be the longest permutation in $\mathfrak{S}_n$. Then, $\omega_0$ acts on a connected component of $\mathcal{B}(\lambda/\mu,n)$ by sending the highest weight element $T$ to the lowest,  $\sigma_{i_1} \cdots \sigma_{i_k} (T) = T^e$.
\end{teo}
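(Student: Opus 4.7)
The plan is to combine two facts established earlier in the paper: each reflection $\sigma_i$ preserves connected components of $\mathcal{B}(\lambda/\mu,n)$ while transforming the weight by $s_i$ (Proposition \ref{sigmai}), and each connected component is isomorphic, as a weighted edge-labelled graph, to a full crystal $\mathcal{B}(\nu,n)$ (Proposition \ref{isomhigh}). The key observation is that while the $\sigma_i$ fail to satisfy braid relations on generic tableaux (Example \ref{exbraid}), starting from a highest weight element the resulting tableau will be pinned down by its weight alone, making the choice of reduced decomposition of $\omega_0$ irrelevant.

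First I would compute the weight of $\sigma_{i_1}\cdots\sigma_{i_k}(T)$. Since $T$ is LRS, $wt(T) = \nu$, and iterating part 4 of Proposition \ref{sigmai} gives
\[
wt\bigl(\sigma_{i_1}\cdots\sigma_{i_k}(T)\bigr) = s_{i_1}\cdots s_{i_k}\cdot \nu = \omega_0 \cdot \nu = \nu^{\mathsf{rev}}.
\]
By part 1 of the same proposition, each $\sigma_i$ preserves connected components (and, being an involution, never returns $\emptyset$), so this tableau lies in the component of $T$. On the other hand, $T^e = \eta(T)$ is the lowest weight vertex of the same component, and the defining property of $\eta$ forces $wt(T^e) = wt(T)^{\mathsf{rev}} = \nu^{\mathsf{rev}}$.

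It then remains to argue that the weight $\nu^{\mathsf{rev}}$ occurs exactly once in the component of $T$. Transporting through the weighted graph isomorphism of Proposition \ref{isomhigh}, this reduces to the analogous uniqueness in $\mathcal{B}(\nu,n)$, where the only tableau of shape $\nu$ and weight $\nu^{\mathsf{rev}}$ is $Y_\nu^E$, as noted in Section \ref{sub23} as a consequence of the uniqueness of $Y_\nu$. Hence $\sigma_{i_1}\cdots \sigma_{i_k}(T) = T^e$. The only genuine subtlety, rather than an obstacle, is recognising that the argument sidesteps braid relations entirely: this is precisely why the hypothesis that $T$ be LRS is essential, since it is only for a highest weight vertex that the single weight $\nu^{\mathsf{rev}}$ determines the resulting tableau, independently of the reduced word chosen for $\omega_0$.
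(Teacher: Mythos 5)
Your proof is correct, and it rests on the same two pillars as the paper's own argument: the weight computation $wt(\sigma_{i_1}\cdots\sigma_{i_k}(T)) = \omega_0\cdot\nu = \nu^{\mathsf{rev}}$ obtained by iterating part 4 of Proposition \ref{sigmai}, and the uniqueness of $Y_\nu^E$ as the only element of $\mathcal{B}(\nu,n)$ of weight $\nu^{\mathsf{rev}}$. Where you diverge is in how the skew case is handled. The paper invokes coplacticity of the $\sigma_i$ to replace $T$ by its rectification $Y_\nu$, proves $\sigma_{i_1}\cdots\sigma_{i_k}(Y_\nu) = Y_\nu^E$ there, and leaves implicit the step identifying the result back in the skew component with $T^e$ (strictly, that step needs Haiman's characterization of reversal as the unique tableau shifted Knuth equivalent to $T^*$ and dual equivalent to $T$, the dual equivalence being supplied by Proposition \ref{compdual}). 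You instead stay inside the connected component of $T$ throughout: part 1 of Proposition \ref{sigmai} keeps the orbit in the component, and the weighted-graph isomorphism of Proposition \ref{isomhigh} transports the uniqueness of the weight $\nu^{\mathsf{rev}}$ from $\mathcal{B}(\nu,n)$ into the component, where $T^e$ is by construction the lowest weight vertex. This makes the final identification with $T^e$ fully explicit without any appeal to rectification or dual equivalence, at the cost of using the full strength of the component isomorphism; the paper's route has more elementary inputs but is terser about the last step. Your closing remark is also the right reading of why the theorem survives Example \ref{exbraid}: the LRS hypothesis makes $\nu^{\mathsf{rev}}$ an extreme weight occurring exactly once in the component, so the conclusion is independent of the reduced word for $\omega_0$ and no braid relations are ever needed.
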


\begin{proof}
Since the operators $\sigma_i$ are coplactic, we may consider $Y_{\nu} = rect(T)$, $\nu=wt(T)$. By Proposition \ref{sigmai}, $\sigma_j$ permutes the entries $j$ and $j+1$ on the weight and keeps the shape $\nu$, and as $\omega_0$ is the longest permutation, $\sigma_{i_1} \ldots \sigma_{i_k}$ reverts the weight of $T$. The uniqueness of $Y_{\nu}$ then ensures that $\sigma_{i_1} \ldots \sigma_{i_k}Y_{\nu}=Y^E_\nu$.
\end{proof}

\section{The cactus group action on the shifted tableau crystal}\label{sec4}

We show that the restrictions of the Schützenberger involution to primed subintervals of $[n]$ define an action of the cactus group $J_n$ on $\mathcal{B}(\lambda/\mu,n)$. Halacheva \cite{Hala16} constructed this action for any $\mathfrak{g}$-crystal, for $\mathfrak{g}$ a complex reductive Lie algebra of finite dimension.

For $1 \leq p < q \leq n$, consider $[p,q]:=\{p<\cdots<q\}$. Let $\theta_{p,q \shortminus 1}$ denote the longest permutation in $\mathfrak{S}_{[p,q \shortminus 1]}$ embedded in $\mathfrak{S}_{I}$, that is, $\theta_{p, q \shortminus 1} (i)$ is $p+q-i-1$ if $i\in[p,q-1]$, and $i$ otherwise, and put $\theta := \theta_{1,n \shortminus 1}$. Given $T \in \mathcal{B}(\lambda/\mu,n)$, let $T^{p,q}:=T^p \sqcup\cdots\sqcup T^q$. In particular, $T^{1,n}=T$. By convention, we set $T^{1,0}=T^{n+1,n}  := \emptyset$ and $T^{p,p} := T^{p}$. 
To formalize the restriction of the Schützenberger involution $\eta$ to an interval $[p,q]'$, we define $\eta_{p,q}:\mathcal{B}(\nu,n)\rightarrow \mathcal{B}(\nu,n)$ as the set map such that $\eta_{p,q} (T) := T^{1,p \shortminus 1} \sqcup [T^{p,q}]^e \sqcup T^{q+1, n}$. In particular, $\eta_{p,p+1}=\sigma_p$ and $\eta_{1,n} = \eta$. This notion is extended on the connected components of $\mathcal{B}(\lambda/\mu,n)$ using Proposition \ref{isomhigh}. We denote by $\mathcal{B}_{p,q}$ the subgraph of $\mathcal{B}(\nu,n)$ obtained by removing the edges coloured in $I\setminus [p,q - 1]$, with the same vertices of $\mathcal{B}(\nu,n)$, ignoring the letters that are not in $[p, q]'$. In particular, $\mathcal{B}_{p,p+1}$ is the collection of $p$-strings. The set $\mathcal{B}(\nu,n)$ is partitioned into classes consisting of the underlying sets of the connected components $\mathcal{B}_{p,q}$ (an example with $\mathcal{B}_{2,3}$ is depicted in Figure \ref{fig:crystal}).

\begin{lema}\label{highestpq} 
Let  $1 \leq p < q \leq n$. Each connected component of $\mathcal{B}_{p,q}$ has unique highest and lowest weight elements.
\end{lema}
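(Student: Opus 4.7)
The plan is to exhibit each connected component of $\mathcal{B}_{p,q}$ as isomorphic, as an edge-labelled graph, to a connected component of a smaller shifted tableau crystal on the alphabet $[q-p+1]'$, and then invoke Proposition \ref{isomhigh}. The underpinning observation is a locality property of the crystal operators: for $i \in [p,q-1]$, the operators $E_i, E_i', F_i, F_i'$ modify only entries with values in $\{i,i',i+1,(i+1)'\} \subseteq [p,q]'$, leaving untouched every other entry as well as its position. This follows from the definition of the primed operators in terms of canonical forms (which only re-prime the leftmost $\mathbf{i}$ or $\mathbf{i+1}$) and from the definition of the unprimed operators via $i$-lattice walks of the reading word, which register only the letters of values $i$ and $i+1$.

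As a consequence, along any connected component $C \subseteq \mathcal{B}_{p,q}$, the subtableaux $T^{1,p-1}$ and $T^{q+1,n}$ are invariant, so the set $B$ of cells occupied by the entries in $[p,q]'$ is the same for every $T \in C$. Semistandardness forces $B$ to be a skew shifted diagram $\alpha/\beta$, where $\beta$ consists of the cells of $\mu$ together with those occupied by entries of value strictly less than $p$, and $\alpha = \beta \cup B$. Consider the restriction-and-relabel map $\Phi \colon C \to \mathcal{B}(\alpha/\beta, q-p+1)$ that keeps only the cells of $B$ and relabels $p+j-1 \mapsto j$ and $(p+j-1)' \mapsto j'$ for $1 \leq j \leq q-p+1$. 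The locality observation ensures that $\Phi$ is injective and intertwines $E_i, E_i', F_i, F_i'$ on $C$ with $E_{i-p+1}, E_{i-p+1}', F_{i-p+1}, F_{i-p+1}'$ on $\mathcal{B}(\alpha/\beta, q-p+1)$, so $\Phi$ identifies $C$ bijectively with a single connected component of that smaller crystal.

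Finally, Proposition \ref{isomhigh} applied to $\mathcal{B}(\alpha/\beta, q-p+1)$ shows that $\Phi(C)$ has a unique highest weight element (an LRS tableau) and is isomorphic, as an edge-labelled graph, to a straight-shape crystal $\mathcal{B}(\tilde\nu, q-p+1)$, whose lowest weight element $Y_{\tilde\nu}^E$ is likewise unique. Pulling back through $\Phi^{-1}$ yields the unique highest and lowest weight elements of $C$. The main obstacle is a rigorous proof of the locality claim: one must trace through the lattice-walk construction of the unprimed operators in \cite[Section 5.1]{GLP17} and check that no entry of value outside $\{i,i+1\}$ is ever displaced or relabelled, and similarly inspect the canonical-form definition of the primed operators. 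Once this is in place, the rest of the argument is a routine transport of structure.
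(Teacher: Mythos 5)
Your proposal is correct, and since this extended abstract states Lemma \ref{highestpq} without proof, there is no in-paper argument to diverge from: the route you take---using the locality of $E_i,E_i',F_i,F_i'$ for $i\in[p,q-1]$ (which, by the very definitions via subwords in $\{i',i,(i+1)',i+1\}$ and standardization-preserving weight shifts, fix all other entries and their cells) to see that each component of $\mathcal{B}_{p,q}$ restricts and relabels isomorphically onto a full connected component of a smaller crystal $\mathcal{B}(\alpha/\beta,q-p+1)$, then invoking Proposition \ref{isomhigh}---is exactly the intended reduction, as carried out in the full version of this paper. The one point worth making explicit is that $\Phi(C)$ is a \emph{full} component, i.e.\ that definedness of an operator on $\Phi(T)$ implies definedness on $T$ in $\mathcal{B}_{p,q}$; this follows from the same locality observation, since the lattice-walk and canonical-form criteria depend only on the subword in $[p,q]'$.
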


\begin{lema}\label{lemautil}
Let $1 \leq p < q \leq n$. Then $\eta_{p,q}: \mathcal{B}_{p,q}\rightarrow \mathcal{B}_{p,q}$ is the unique involution such that for all $T$ in each connected component of  $\mathcal{B}_{p,q}$, we have, for all $i\in [p,q-1]$:

1. $E_i' \eta_{p,q} (T) = \eta_{p,q} F_{\theta_{p,q \shortminus 1}(i)}' (T)$ and $E_i \eta_{p,q} (T) = \eta_{p,q} F_{\theta_{p,q \shortminus 1}(i)} (T)$.

2. $F_i' \eta_{p,q} (T) = \eta_{p,q} E_{\theta_{p,q \shortminus 1}(i)}' (T)$ and $F_i \eta_{p,q} (T) = \eta_{p,q} E_{\theta_{p,q \shortminus 1}(i)} (T)$.

3. $wt(\eta_{p,q} (T)) = \theta_{p,q \shortminus 1} \cdot wt (T)$.

\end{lema}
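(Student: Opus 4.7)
The plan is to reduce the statement to the already-established properties of the full Schützenberger involution $\eta$ applied to the restricted subalphabet $[p,q]'$, and then bootstrap uniqueness from Lemma \ref{highestpq}. The key observation that makes this reduction work is that, for $i \in [p,q-1]$, the operators $E_i, F_i, E_i', F_i'$ act only on the sub-filling $T^{p,q}$ of $T$: primed operators preserve standardization and change only two adjacent weight coordinates, and the unprimed operators are defined through the $i$-lattice walk which depends exclusively on the letters $\mathbf{i},\mathbf{i+1}$. Consequently, each connected component of $\mathcal{B}_{p,q}$ is isomorphic, as a weighted edge-labelled graph, to a shifted tableau crystal $\mathcal{B}(\tilde\nu, q-p+1)$ on the relabelled alphabet, via $T\mapsto T^{p,q}$ (with the rest of $T$ as a fixed decoration).

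Next, I would verify that the map $T\mapsto T^{1,p\shortminus 1}\sqcup [T^{p,q}]^e \sqcup T^{q+1,n}$ satisfies the three stated properties. Since the reversal $e$ coincides with the Schützenberger involution on shifted tableau crystals (by the proposition stated just after the definition of $\eta$), the operator $[T^{p,q}]^e$ is exactly the restriction of $\eta$ to the shifted crystal supported on the letters $[p,q]'$. The defining intertwining properties of $\eta$ in that crystal directly translate into conditions (1) and (2) for indices $i\in[p,q-1]$, because the longest permutation of $\mathfrak{S}_{[p,q-1]}$ embedded in $\mathfrak{S}_I$ is $\theta_{p,q\shortminus 1}$, and the operators act trivially on the sub-fillings $T^{1,p\shortminus 1}$ and $T^{q+1,n}$. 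Condition (3) on the weight follows because evacuation reverses the weight of $T^{p,q}$, so the weight coordinates of $\eta_{p,q}(T)$ indexed by $[p,q]$ are reversed while those outside are unchanged; this is precisely the action of $\theta_{p,q\shortminus 1}$ on $wt(T)$. That $\eta_{p,q}$ is an involution is inherited from $((T^{p,q})^e)^e=T^{p,q}$.

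For uniqueness, suppose $\tau\colon \mathcal{B}_{p,q}\to\mathcal{B}_{p,q}$ is any involution satisfying (1)--(3). By (1) and (3), $\tau$ sends the highest weight element of each connected component of $\mathcal{B}_{p,q}$ (unique by Lemma \ref{highestpq}) to an element annihilated by all $F_i, F_i'$, $i\in[p,q-1]$, that is, to the unique lowest weight element, which is also the value of $\eta_{p,q}$ on the highest weight element. Via the analogue of Proposition \ref{highuni} applied inside each connected component of $\mathcal{B}_{p,q}$, every element can be obtained from the highest weight one by a sequence of lowering operators indexed in $[p,q-1]$; the intertwining relations (2) then force $\tau$ to agree with $\eta_{p,q}$ on every element.

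The step requiring most care will be justifying the reduction to the smaller alphabet, namely that restricting a tableau $T\in\mathcal{B}(\lambda/\mu,n)$ to its letters in $[p,q]'$ produces a semistandard shifted tableau (of possibly disconnected skew shape) on which the crystal structure agrees with that of the standard shifted tableau crystal on $q-p+1$ letters; this is essentially the locality of the primed and unprimed operators, together with the observation that taking $(\cdot)^e$ componentwise on the skew shape of $T^{p,q}$ is well defined since each connected component behaves like its own sub-crystal, all of which follows from the coplacticity of the operators and the definition of $\eta_{p,q}$ via extension to connected components through Proposition \ref{isomhigh}.
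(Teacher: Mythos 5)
Your proof is correct and follows essentially the route the paper intends: this extended abstract states Lemma \ref{lemautil} without proof, but your argument assembles exactly the ingredients the paper supplies --- locality of $E_i, F_i, E_i', F_i'$ on the letters $\{\mathbf{i},\mathbf{i+1}\}$, so that $T\mapsto T^{p,q}$ identifies each connected component of $\mathcal{B}_{p,q}$ with a shifted tableau crystal on $q-p+1$ letters (Proposition \ref{isomhigh}), the identification of $\eta$ with reversal, existence of $\eta_{p,q}$ from the defining properties of $\eta$ on the relabelled subalphabet, and uniqueness via Lemma \ref{highestpq} together with connectedness (the analogue of Proposition \ref{highuni}). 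One cosmetic slip only: at the highest weight element it is relation (2), not (1), that gives $F_i\,\tau(T) = \tau E_{\theta_{p,q\shortminus 1}(i)}(T) = \emptyset$, and the downward propagation uses (1) rearranged via $\tau^2 = \mathrm{id}$; since (1) and (2) are equivalent for an involution, this does not affect correctness.
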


We also have that $\eta_{p,q}$ maps the highest weight of $\mathcal{B}_{p,q}$ to its lowest weight and that $\varepsilon_i (T) = \eta_{p,q} \varphi_{\theta_{p,q \shortminus 1}} (T)$ and $\varphi_i (T) = \eta_{p,q} \varepsilon_ {\theta_{p,q \shortminus 1}} (T)$, for $T \in \mathcal{B}_{p,q}$ and $i \in [p,q-1]$.

\begin{defin}[\cite{HenKam06}, Section 3.1]
The $n$-fruit \emph{cactus group} $J_n$ is the free group  with generators $s_{p,q}$,
$1 \leq p < q \leq n$, subject to the relations:
	
1. $s_{p,q}^2 = id$.

2. $s_{p,q} s_{k,l} = s_{k,l} s_{p,q}$ for $[p,q] \cap [k,l] = \emptyset$.
	
3. $s_{p,q}s_{k,l} = s_{p+q-l,p+q-k} s_{p,q}$ for $[k,l] \subseteq [p,q]$.
\end{defin}

\begin{figure}[h]
\begin{center}
\includegraphics[scale=0.5]{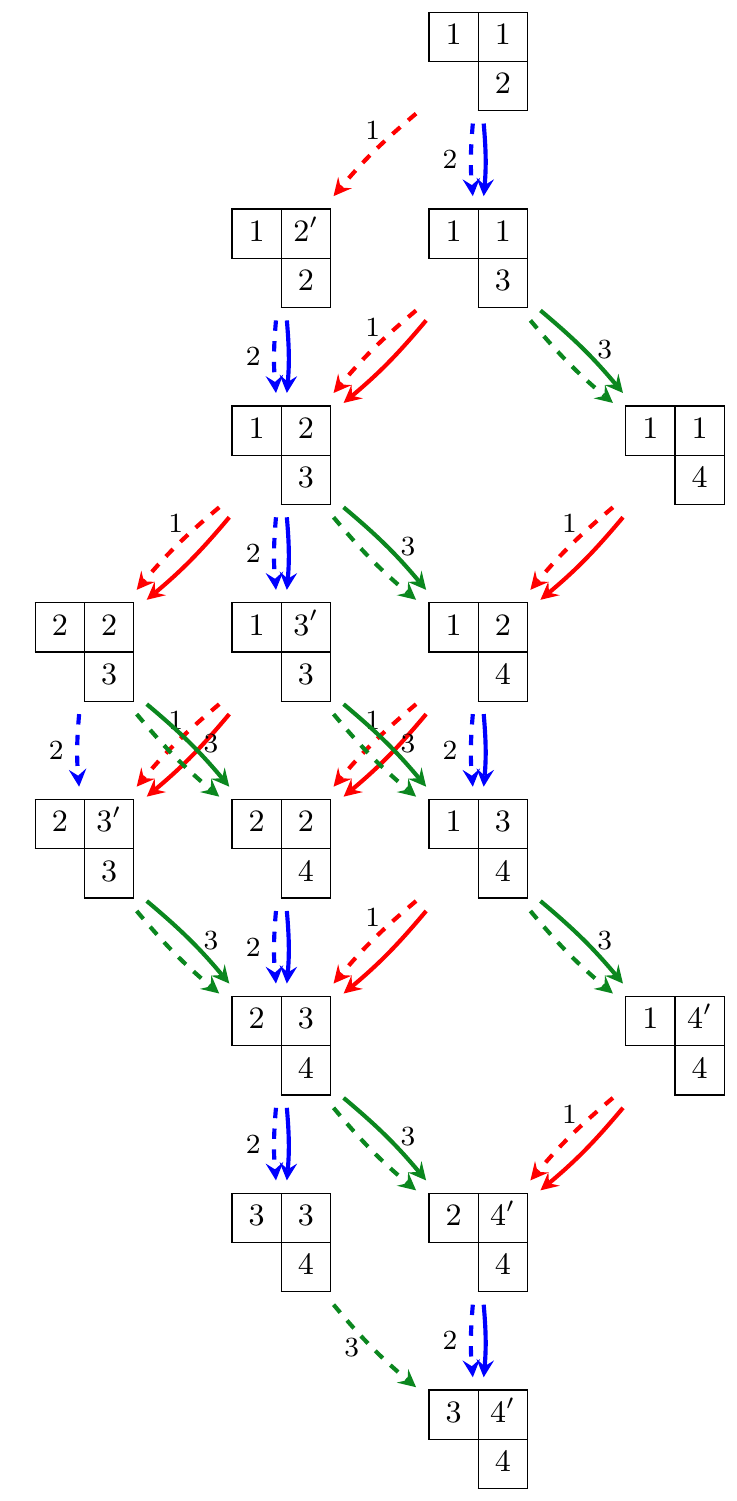}\qquad
\includegraphics[scale=0.5]{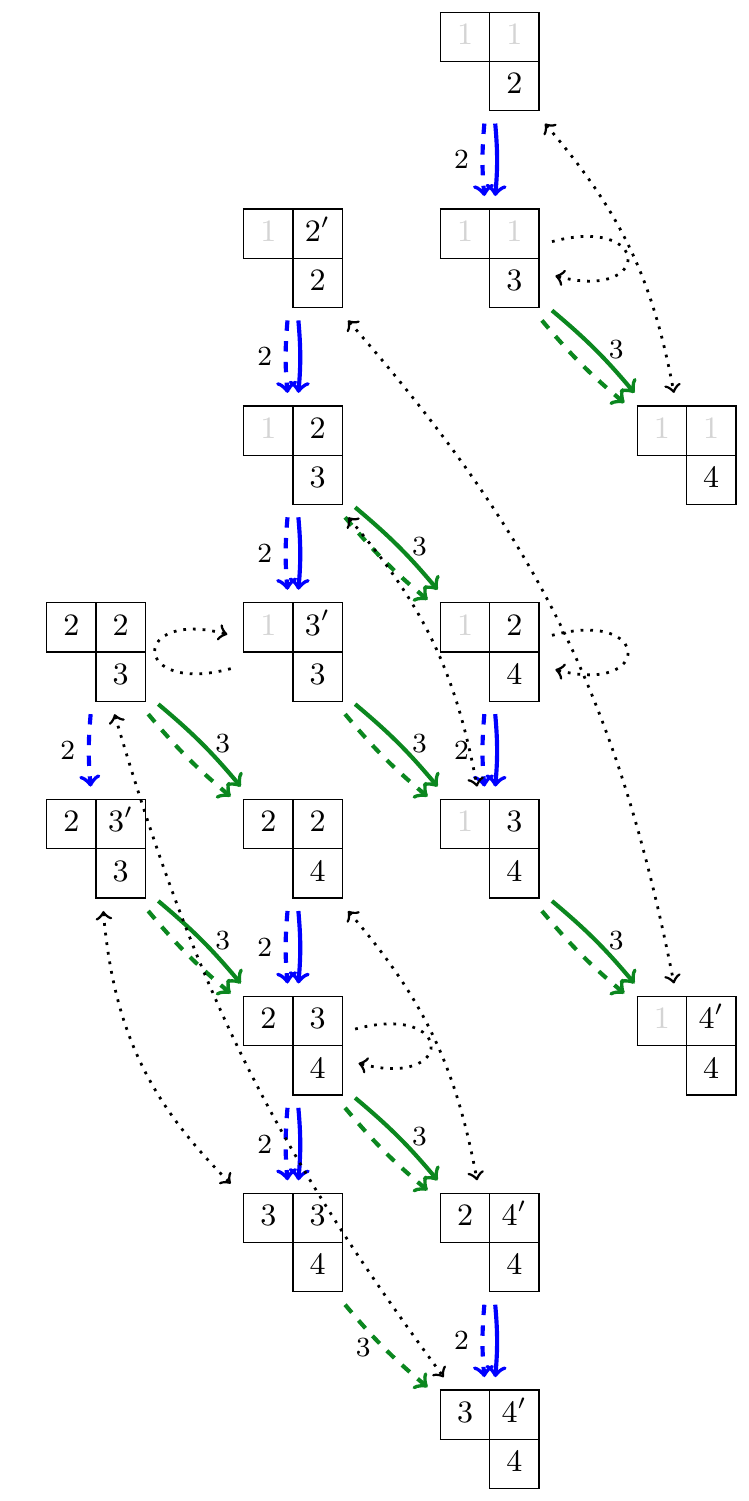}\qquad
\includegraphics[scale=0.5]{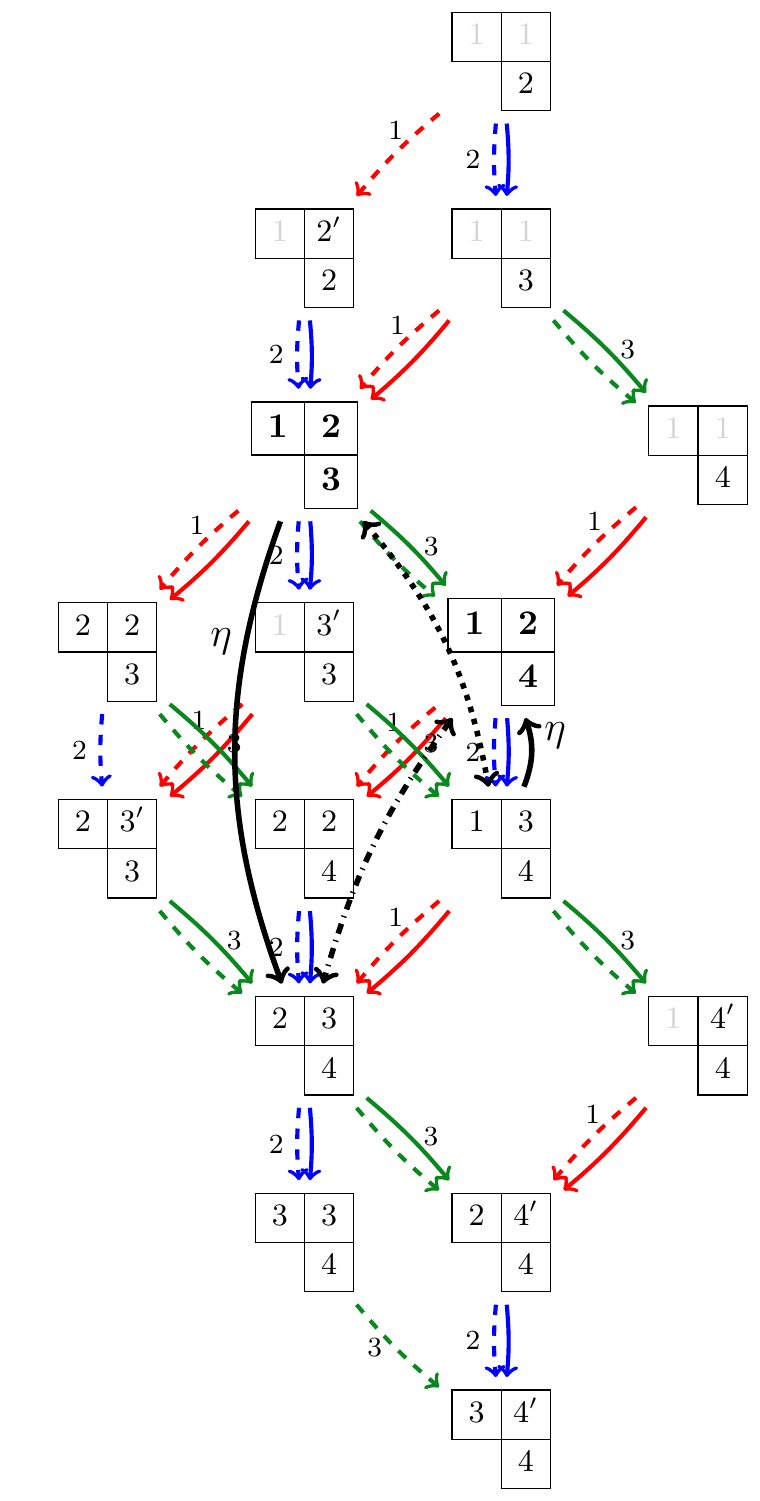}

\end{center}
\caption{On the left, the shifted tableau crystal $\mathcal{B}(\nu,4)$, for $\nu=(2,1)$, and the action of $s_{2,4}$ in the middle. On the right, an illustration of $s_{1,3} s_{1,4} = s_{1,4} s_{2,4}$.}
\label{fig:crystal}
\end{figure}

\begin{teo}[Main result]\label{cactusaction}
There exists a natural action of the $n$-fruit cactus group $J_n$ on the shifted tableau crystal $\mathcal{B}(\lambda/\mu,n)$ given by $s_{p,q} \cdot T = \eta_{p,q} (T)$, for $1 \leq p < q \leq n$.
\end{teo}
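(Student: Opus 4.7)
My plan is to verify directly the three defining relations of the cactus group $J_n$ for the operators $\eta_{p,q}$: (i)~$\eta_{p,q}^2=\mathrm{id}$; (ii)~$\eta_{p,q}\eta_{k,l}=\eta_{k,l}\eta_{p,q}$ when $[p,q]\cap[k,l]=\emptyset$; (iii)~$\eta_{p,q}\eta_{k,l}=\eta_{p+q-l,\,p+q-k}\,\eta_{p,q}$ when $[k,l]\subseteq[p,q]$. Since the primed and unprimed crystal operators are coplactic and $\eta_{p,q}$ is extended from straight-shape components $\mathcal{B}(\nu,n)$ via Proposition \ref{isomhigh}, it suffices to work inside a single $\mathcal{B}(\nu,n)$.

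Relation (i) is immediate from the definition $\eta_{p,q}(T)=T^{1,p\shortminus 1}\sqcup[T^{p,q}]^e\sqcup T^{q+1,n}$, the involutivity of the reversal (Section \ref{subsectevacrev}), and the fact that letters outside $[p,q]$ are untouched. For (ii), assume without loss of generality $q<k$. Expanding gives
\[
\eta_{p,q}\eta_{k,l}(T)=T^{1,p\shortminus 1}\sqcup[T^{p,q}]^e\sqcup T^{q+1,k\shortminus 1}\sqcup[T^{k,l}]^e\sqcup T^{l+1,n}=\eta_{k,l}\eta_{p,q}(T),
\]
because the boxes occupied by $[p,q]$-entries are preserved by $\eta_{p,q}$ (shape is invariant under dual equivalence) and likewise for $\eta_{k,l}$, so the two operators act on disjoint sets of boxes.

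Relation (iii) is the crux, and I would handle it by invoking the uniqueness in Lemma \ref{lemautil}. Set $\phi:=\eta_{p,q}\eta_{k,l}\eta_{p,q}$, which is an involution since each $\eta$ is. To prove $\phi=\eta_{p+q-l,\,p+q-k}$, I verify that $\phi$ satisfies the characterizing commutation relations for $i\in[p+q-l,\,p+q-k-1]$ together with the weight condition. Three successive applications of Lemma \ref{lemautil} give
\begin{align*}
E_i'\,\phi(T)
&=\eta_{p,q}\,F_{\theta_{p,q\shortminus 1}(i)}'\,\eta_{k,l}\,\eta_{p,q}(T)\\
&=\eta_{p,q}\,\eta_{k,l}\,E_{\theta_{k,l\shortminus 1}\theta_{p,q\shortminus 1}(i)}'\,\eta_{p,q}(T)\\
&=\phi\,F_{\theta_{p,q\shortminus 1}\theta_{k,l\shortminus 1}\theta_{p,q\shortminus 1}(i)}'(T),
\end{align*}
so the required commutation reduces to the identity $\theta_{p,q\shortminus 1}\theta_{k,l\shortminus 1}\theta_{p,q\shortminus 1}=\theta_{p+q-l,\,p+q-k-1}$ on $[p+q-l,\,p+q-k-1]$, a standard conjugation in $\mathfrak{S}_n$ valid because $[k,l-1]\subseteq[p,q-1]$, which is verified by a direct index calculation. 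The same pattern covers the unprimed operators and the weight condition.

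The main obstacle is the componentwise bookkeeping required to invoke Lemma \ref{lemautil}: one must check that at each step of the threefold commutation the crystal operator being applied is indeed defined on the relevant element, and that $\phi$ preserves the connected component decomposition of $\mathcal{B}_{p+q-l,\,p+q-k}$. This amounts to tracing how $\eta_{p,q}$ matches highest-weight elements of $\mathcal{B}_{p+q-l,\,p+q-k}$ (guaranteed by Lemma \ref{highestpq}) with those of $\mathcal{B}_{k,l}$ under the relabeling $\theta_{p,q\shortminus 1}$. Once this is established, the uniqueness in Lemma \ref{lemautil} identifies $\phi$ with $\eta_{p+q-l,\,p+q-k}$, and (iii) follows.
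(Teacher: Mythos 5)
Your proposal is correct, and for relations (i) and (ii) it matches the paper essentially verbatim; the genuine divergence is in the nested relation (iii). The paper first observes that, since $\eta_{p,q}$ touches only the letters in $[p,q]'$, the ambient interval may be taken to be all of $[1,n]$, so it suffices to prove $\eta\,\eta_{p,q}=\eta_{1+n-q,1+n-p}\,\eta$; it then writes an arbitrary $T$ as a word in lowering operators applied to the highest weight element of its component of $\mathcal{B}_{p,q}$ (via Lemma \ref{highestpq}), pushes both $\eta\,\eta_{p,q}(T)$ and $\eta_{n-q+1,n-p+1}\,\eta(T)$ through the intertwining relations of Lemma \ref{lemautil}, and verifies that both sides equal the identical word in lowering operators applied to $Y_\nu$, using that $\eta$ carries a component $\mathcal{B}_0$ of $\mathcal{B}_{p,q}$ to a component of $\mathcal{B}_{n-q+1,n-p+1}$ while swapping its highest and lowest weight elements. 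You instead prove the nested relation in full generality by conjugation: you check that $\phi=\eta_{p,q}\eta_{k,l}\eta_{p,q}$ satisfies the characterizing relations of Lemma \ref{lemautil} for the interval $[p+q-l,\,p+q-k]$, reduce to the permutation identity $\theta_{p,q-1}\theta_{k,l-1}\theta_{p,q-1}=\theta_{p+q-l,\,p+q-k-1}$ (which is correct, and in fact holds on all of $I$, not just the stated subinterval), and then invoke the uniqueness clause of that lemma. Your route buys a cleaner, essentially computation-free identification and skips the reduction to the full ambient interval, but it leans harder on uniqueness, which is only legitimate among involutions preserving each connected component of $\mathcal{B}_{p+q-l,\,p+q-k}$ --- distinct components can be isomorphic with equal weights when the multiplicities $f_{\mu\nu}^{\lambda}$ exceed one, so the commutation and weight conditions alone do not pin the map down. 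You correctly flag this as the outstanding bookkeeping, and it does go through: the intertwining relations show $\eta_{p,q}$ maps each component of $\mathcal{B}_{p+q-l,\,p+q-k}$ into a single component of $\mathcal{B}_{k,l}$ (edges go to relabelled edges, so connected sets stay connected), $\eta_{k,l}$ fixes that component setwise, and involutivity of $\eta_{p,q}$ returns it to the original component. By contrast, the paper's explicit operator-word computation makes this component tracking concrete through highest and lowest weight elements and needs only the existence half of Lemma \ref{lemautil}, at the cost of a longer calculation.
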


\begin{proof}
It suffices to exhibit the action in a connected component identified with $\mathcal{B}(\nu,n)$, due to Proposition \ref{isomhigh}.
Since the operator $\eta$ is an involution, it follows that $s_{p,q}^2 = id$ for all $1\leq p < q \leq n$. The second relation is a direct consequence of $\eta_{p,q}$ to act only on letters $[p, q]'$, leaving the remaining ones unchanged. For the third relation (see Figure \ref{fig:crystal} on the right), we note that the larger set is irrelevant, hence it suffices to show that $\eta \eta_{p,q} = \eta_{1+n-q,1+n-p} \eta$, for $[p,q] \subseteq [1,n]$. Let $T \in \mathcal{B}(\nu,n)$ and assume that it is in a connected component $\mathcal{B}_0$ of $\mathcal{B}_{p,q}$. By Lemma \ref{highestpq}, $\mathcal{B}_0$ has a unique highest weight $T_{0}^{\mathsf{high}}$, and lowest weight $T_{0}^{\mathsf{low}} = \eta_{p,q} (T_{0}^{\mathsf{high}})$. Moreover, $\mathcal{B}_{0} \subseteq \mathcal{B}(\nu,n)$, which has a highest weight element $Y_{\nu}$ and a lowest weight $Y_{\nu}^{\mathsf{low}} = \eta (Y_{\nu})$. Then, for some $i_1, \ldots, i_k \in [p,q-1]$, $j_1, \ldots, j_l \in [1, n-1]$, $m_i, a_j \in \{0,1\}, n_i, b_j \geq 0$, we have:
	$$T = F_{i_1}'^{m_1} F_{i_1}^{n_1} \ldots F_{i_k}'^{m_k} F_{i_k}^{n_k} (T_{0}^{\mathsf{high}})\quad \text{and} \quad
	T_{0}^{\mathsf{low}} = E_{j_1}'^{a_1} E_{j_1}^{b_1} \ldots E_{j_l}'^{a_l} E_{j_l}^{b_l} (Y_{\nu}^{\mathsf{low}})$$
Thus, using Lemma \ref{lemautil}, we may prove that:
\begin{equation}\label{RHScactus}
	\eta \eta_{p,q} (T) = F_{\theta\theta_{p,q \shortminus 1} (i_1)}'^{m_1} F_{\theta\theta_{p,q \shortminus 1}(i_1)}^{n_1} \ldots F_{\theta\theta_{p,q \shortminus 1}(i_k)}'^{m_k} F_{\theta\theta_{p,q \shortminus 1}(i_k)}^{n_k} F_{\theta(j_1)}'^{a_1} F_{\theta(j_1)}^{b_1} \ldots F_{\theta(j_l)}'^{a_l} F_{\theta(j_l)}^{b_l} (Y_{\nu})
	\end{equation}
We note that $\eta$ takes the connected component $\mathcal{B}_0$ to another connected component $\mathcal{B}_1$ of $\mathcal{B}_{n-q+1,n-p+1}$. We have that $\eta$ interchanges the highest and lowest weight elements in $\mathcal{B}_0$ and $\mathcal{B}_1$, thus, $\eta(T_0^{\mathsf{low}})$ and $\eta (T_0^{\mathsf{high}})$ are, respectively, the highest and lowest weight elements of $\mathcal{B}_1$. Since $\mathcal{B}_1$ is a component of $\mathcal{B}_{n-q+1,n-p+1}$, then $\eta_{n-q+1,n-p+1}$ maps its lowest weight to its highest weight, hence $\eta_{n-q+1,n-p+1} \eta (T_0^{\mathsf{high}})$ is the highest weight in $\mathcal{B}_1$. Then, we have $\eta_{n-q+1,n-p+1} \eta (T_{0}^{\mathsf{high}}) = \eta (T_{0}^{\mathsf{low}})$ and we may prove that:
	\begin{equation}\label{LHScactus}
	\eta_{n-q+1,n-p+1} \eta (T) = F_{\theta\theta_{p,q \shortminus 1}(i_1)}'^{m_1} F_{\theta\theta_{p,q \shortminus 1}(i_1)}^{n_1} \ldots F_{\theta\theta_{p,q \shortminus 1}(i_k)}'^{m_k} F_{\theta\theta_{p,q \shortminus 1}(i_k)}^{n_k} F_{\theta(j_1)}'^{a_1} F_{\theta(j_1)}^{b_1} \ldots F_{\theta(j_l)}'^{a_l} F_{\theta(j_l)}^{b_l} (Y_{\nu})
	\end{equation}
Hence, by \eqref{RHScactus} and \eqref{LHScactus}, we have $\eta\eta_{p,q}(T) = \eta_{n-q+1,n-p+1} \eta (T)$.
\end{proof}

\acknowledgements{The author wishes to express her gratitude to her supervisors Olga Azenhas and Maria Manuel Torres and to acknowledge the hospitality of the Department of Mathematics of University of Coimbra.}

{\small \bibliographystyle{plain}\bibliography{bibliography}}

\end{document}